\documentclass{article}
\usepackage[utf8]{inputenc}

\title{Global stabilization of the cubic defocusing nonlinear Schrödinger equation on the torus}
\author{Kévin Le Balc'h, Jérémy Martin}

\usepackage[top=3cm, bottom=2cm, left=3cm, right=3cm]{geometry}	

\usepackage{enumitem}
\usepackage{array}											

\usepackage{amsfonts}
\usepackage{amsmath}
\usepackage{amsthm}
\usepackage{amssymb}
\usepackage{mathtools}

\usepackage{multicol}

\usepackage{bbm}

\usepackage{stmaryrd}

\usepackage[scr]{rsfso}

\usepackage{comment}

\usepackage[dvipsnames]{xcolor}

\usepackage{hyperref}

\hypersetup{	
  colorlinks=true,
  breaklinks=true,
  urlcolor= red,
  linkcolor= red,
  citecolor=ForestGreen
}

\numberwithin{equation}{section}

\newtheorem{theorem}{Theorem}[section]

\newtheorem{lemma}[theorem]{Lemma}
\newtheorem{proposition}[theorem]{Proposition}

\newtheorem{corollary}[theorem]{Corollary}

\numberwithin{equation}{section}

\newcommand{\norme}[1]{\left\lVert#1\right\rVert}

\newcommand{\ensemblenombre}[1]{\mathbb{#1}}

\newcommand{\Z}{\ensemblenombre{Z}}

\newcommand{\R}{} 
\renewcommand{\R}{\ensemblenombre{R}}
\newcommand{\C}{\ensemblenombre{C}}

\newcommand{\T}{\ensemblenombre{T}}

\newcommand{\rr}{\mathbb{R}}
\newcommand{\nn}{\mathbb{N}}

\newcommand{\Ree}{\text{Re}}
\newcommand{\Ima}{\text{Im}}

\newcommand\bna{\begin{eqnarray*}}
\newcommand\ena{\end{eqnarray*}}

\newcommand\bnan{\begin{eqnarray}}
\newcommand\enan{\end{eqnarray}}

\begin{document}

\maketitle

\begin{abstract}
    In this article, we prove the (uniform) global exponential stabilization of the cubic defocusing Schrödinger equation on the torus $(\R/2 \pi \Z)^d$, for $d=1$, $2$ or $3$, with a linear damping localized in a subset of the torus satisfying some geometrical assumptions. In particular, this answers an open question of Dehman, Gérard, Lebeau from 2006. Our approach is based on three ingredients. First, we prove the well-posedness of the closed-loop system in Bourgain spaces. Secondly, we derive new Carleman estimates for the nonlinear equation by directly including the cubic term in the conjugated operator. Thirdly, by conjugating with energy estimates and Morawetz multipliers method, we then deduce quantitative observability estimates leading to the uniform exponential decay of the total energy of the system. As a corollary of the global stabilization result, we obtain an upper bound of the minimal time of the global null-controllability of the nonlinear equation by using a stabilization procedure and a local null-controllability result.
\end{abstract}
 \small
\tableofcontents
\normalsize

\section{Introduction}

\subsection{Control of Schrödinger equations on compact manifolds}

Let $(\mathcal M,g)$ be a compact smooth connected boundaryless Riemannian $d$-dimensional manifold, for $d \in \{1, 2, 3\}$ and $\Delta_g$ be the Laplace Beltrami operator on $\mathcal M$ associated to the metric $g$. Very quickly, we will restrict ourselves to $\mathcal M = \T^d = (\R/2 \pi \Z)^{d}$.

We are interested in the cubic defocusing nonlinear Schrödinger equation
\begin{equation}
	\label{eq:NonlinearSchrodinger_manifold}
		\left\{
			\begin{array}{ll}
				i  \partial_t u  = -\Delta_g u + |u|^{2} u & \text{ in }  (0,+\infty) \times \mathcal M, \\
				u(0, \cdot) = u_0 & \text{ in } \mathcal M.
			\end{array}
		\right.
\end{equation}
This equation arises naturally in nonlinear optics, as a model of wave propagation in fiber optics. The function $u(t,x) \in \C$ represents a wave and the nonlinear Schrödinger equation describes the propagation of the wave through a nonlinear medium. In this context, the metric $g$ can be interpreted as an inhomogeneity of the optical index.

Formally, in \eqref{eq:NonlinearSchrodinger_manifold}, two quantities are conserved. First, by multiplying by $\overline{u}$ the equation \eqref{eq:NonlinearSchrodinger_manifold} and by taking the imaginary part, we observe that the $L^2$-energy is conserved, i.e.
\begin{equation}
   \label{eq:conservL2}
    \frac{d}{dt} \left(\int_{\mathcal M} |u(t,x)|^2 dx \right) = 0\qquad \forall t \geq 0.
\end{equation}
Secondly, by multiplying by $\partial_t \overline{u}$ the equation \eqref{eq:NonlinearSchrodinger_manifold} and by taking the real part, we also observe that the nonlinear-energy (or $H^1$-energy) is conserved, i.e.
\begin{equation}
    \label{eq:conservEnergyH1}
  \frac{d}{dt} \left( \frac{1}{2}\int_{\mathcal M} |\nabla u(t,x)|^2 dx +   \frac{1}{4} \int_{\mathcal M} |u(t,x)|^{4}  dx \right) = 0 \qquad \forall t \geq 0.
\end{equation}

Concerning the (global) well-posedness of \eqref{eq:NonlinearSchrodinger_manifold} for initial data in $H^1(\mathcal M)$, in the $1$-dimensional case, this comes from energy estimates and Sobolev embeddings, see for instance \cite[Corollary 3.5.2]{Caz03}. However, this strategy fails in the $d$-dimensional case ($d \geq 2$), see for instance \cite[Corollary 3.5.2]{Caz03}. In order to obtain global existence results in $H^s(\mathcal M)$ for $s \geq 1$, one needs to use Strichartz-type estimates. For $\mathcal M = \T^d$, we have that \eqref{eq:NonlinearSchrodinger_manifold} is globally well-posed in $H^s(\T^d)$ for every $s \geq 1$, up to dimension $d=3$, see \cite{Bou93} and \cite[Chapter 5]{Bou99}. For $\mathcal M = S^d$, \eqref{eq:NonlinearSchrodinger_manifold} is globally well-posed in $H^s(\T^d)$ for every $s \geq 1$, up to dimension $d=3$, see \cite{BGT05a} for surfaces and \cite{BGT05b} for $d=3$.

The goal of the paper would be to analyse controllability and stabilization properties of \eqref{eq:NonlinearSchrodinger_manifold} with the help of a force $h$ localized in $\omega$, a nonempty open subset of $\mathcal M$, satisfying some geometrical assumptions, see below.\\

We first introduce the controlled linear Schrödinger equation
\begin{equation}
	\label{eq:linearSchrodinger_manifold}
		\left\{
			\begin{array}{ll}
				i  \partial_t u  = -\Delta_g u + h 1_{\omega} & \text{ in }  (0,+\infty) \times \mathcal M, \\
				u(0, \cdot) = u_0 & \text{ in } \mathcal M.
			\end{array}
		\right.
\end{equation}
In \eqref{eq:linearSchrodinger_manifold}, at time $t \in (0,+\infty)$, $u(t, \cdot) : \mathcal M \to \C$ is the state and $h(t, \cdot) : \omega \to \C$ is the control.

Controllability for the linear Schrödinger equation has been started to be strongly investigated in the 1990's. We recall the definitions of controllability and its dual notion, called observability.\\

Let $s \geq 0$ and $T>0$. 

The equation \eqref{eq:linearSchrodinger_manifold} is \textbf{exactly controllable} in $H^s(\mathcal M)$ at time $T>0$ if for every $u_0 \in H^s(\mathcal M)$ and $u_1 \in H^s(\mathcal M)$, there exists $h \in L^2(0,T;H^s(\mathcal M))$ such that the mild solution $u$ of \eqref{eq:linearSchrodinger_manifold} belongs to $C([0,T];H^s(\mathcal M))$ and satisfies $u(T, \cdot) = u_1$.


The linear Schrödinger equation is \textbf{observable} in $H^{-s}(\mathcal M)$ at time $T>0$ if there exists a constant $C = C(\mathcal M, \omega, T)>0$ such that for every $u_0 \in H^{-s}(\mathcal M)$, $\norme{u_0}_{H^{-s}(\mathcal M)}^2 \leq C \int_{0}^{T} \norme{e^{it \Delta}u_0 1_{\omega}}_{H^{-s}(\mathcal M)}^2 dt$.

The \textbf{Hilbert Uniqueness Method} (H.U.M.) relates the two previous notions, see for instance \cite[Theorem 2.42]{Cor07}. The controlled linear Schrödinger equation \eqref{eq:linearSchrodinger_manifold} is exactly controllable in $H^s(\mathcal M)$ at time $T>0$ if and only if the the linear Schrödinger equation is observable in $H^{-s}(\mathcal M)$ at time $T>0$.\\

In this direction, one of the most breakthrough result is from \cite{Leb92} that guarantees that the so-called Geometric Control Condition (GCC) for the wave equation equation is sufficient for the exact controllability of the Schrödinger equation in any time $T>0$. The proof of this result is based on microlocal analysis. The GCC can be, roughly, formulated as follows, the subdomain $\omega$ is said to
satisfy the GCC in time $T >0$ if and only if all rays of Geometric Optics that propagate inside the domain reach the control set $\omega$ in time less than $T$. A particular case of this result was proved previously by \cite{Mac94} by multiplier techniques. One can also see \cite{LTZ04} for the obtention of such results by using Carleman estimates. In \cite{Phu01}, the author establishes the connections between the heat, wave and Schrödinger equations through suitable integral transformations called Fourier-Bros-Iagonitzer (FBI) transformations. This allows him to get, for instance, estimates on the cost of approximate controllability for the Schrödinger equation when the GCC is not satisfied and also on the dependence of the size of the control with respect to the control time. For the sphere $S^d$, by using explicit quasimodes that concentrate on the equator, one can prove that the GCC is necessary and sufficient for the observability. On the other hand, there a number of results showing that, in some situations in which the GCC is not fulfilled in any time, one can still achieve very satisfactory results for the Schrödinger equation. For instance, any nonempty open subset of $\T^d$ is sufficient for the observability then for the exact controllability of the Schrödinger equation, see \cite{Jaf90}, \cite{KL05} by using Ingham's estimates or \cite{AM14} by using semi-classical measures. Note that it has been recently extended to any nontrivial measurable subset of $\T^2$ in \cite{BZ19} with the crucial use of dispersive properties of the Schrödinger equation. For the unit disk $\mathbb D$ with Dirichlet boundary conditions, explicit eigenfunctions concentrate near the boundary so one can prove that the observability holds if and only if $\omega$ contains a (small) part of the boundary $\partial \mathbb D$, see \cite{ALM16}. For a survey of these results up to 2002, one can read \cite{Zua03}.\\

For $\lambda \in \R^{*}$, the controlled cubic focusing ($\lambda <0$) or defocusing ($\lambda >0$) Schrödinger equation writes as follows
\begin{equation}
	\label{eq:NonlinearSchrodinger_manifoldControlled}
		\left\{
			\begin{array}{ll}
				i  \partial_t u  = -\Delta u + \lambda |u|^{2} u + h 1_{\omega} & \text{ in }  (0,+\infty) \times \mathcal M, \\
				u(0, \cdot) = u_0 & \text{ in } \mathcal M.
			\end{array}
		\right.
\end{equation}
In \eqref{eq:NonlinearSchrodinger_manifoldControlled}, at time $t \in (0,+\infty)$, $u(t, \cdot) : \mathcal M \to \C$ is the state and $h(t, \cdot) : \omega \to \C$ is the control.

Controllability and stabilization properties have been started to be investigated at the beginning of the $2000's$. These notions are usually split into local, semiglobal and global. We just give the definitions for the stabilization in $H^s(\mathcal M)$, for a feedback operator $P \in \mathcal{L}(H^s(\T^d))$, to illustrate the differences between them and because we will mainly focus on it after. Other precise definitions, in particular concerning controllability, can be found in references mentioned below.\\

Let $s \geq 0$ and assume that there exists a family of Hilbert spaces $(E_{T,s})_{T>0} \subset \mathcal C([0,T], H^s(\mathcal M))$ such that for all $T>0$, the equation \eqref{eq:NonlinearSchrodinger_manifoldControlled} posed on the time interval $[0,T]$ with $h=0$ and $u_0 \in H^s(\mathcal M)$ is well-posed on the space $E_{T,s}$.

The equation \eqref{eq:NonlinearSchrodinger_manifoldControlled} is \textbf{locally exponentially stabilizable} if there exists $P\in \mathcal{L}(H^s(\mathcal M))$ such that the equation \eqref{eq:NonlinearSchrodinger_manifoldControlled} with $h = Pu$ is well-posed on $E_{T,s}$ for all $T>0$ and there exist $\delta >0$, $C>0$ and $\gamma>0$ such that for every $u_0 \in H^s(\mathcal M)$ satisfying $\|u_0\|_{H^s(\mathcal M)} \leq \delta$, 
the solution $u$ of \eqref{eq:NonlinearSchrodinger_manifoldControlled} satisfies $\|u(t, \cdot)\|_{H^s(\mathcal M)} \leq C e^{- \gamma t}$ for every $t \geq 0$.

The equation \eqref{eq:NonlinearSchrodinger_manifoldControlled} is \textbf{semiglobally exponentially stabilizable} if there exists $P\in \mathcal{L}(H^s(\mathcal M))$ such that the equation \eqref{eq:NonlinearSchrodinger_manifoldControlled} with $h = Pu$ is well-posed on $E_{T,s}$ for all $T>0$ and for every $R>0$, there exist $C=C(R)>0$ and $\gamma=\gamma(R)>0$ such that for every $u_0 \in H^s(\mathcal M)$ satisfying $\|u_0\|_{H^s(\mathcal M)} \leq R$, the solution $u$ of \eqref{eq:NonlinearSchrodinger_manifoldControlled} satisfies $\|u(t, \cdot)\|_{H^s(\mathcal M)} \leq C e^{- \gamma t}$ for every $t \geq 0$.

The equation \eqref{eq:NonlinearSchrodinger_manifoldControlled} is  \textbf{globally exponentially stabilizable} if there exists $P\in \mathcal{L}(H^s(\mathcal M))$ such that the equation \eqref{eq:NonlinearSchrodinger_manifoldControlled} with $h = Pu$ is well-posed on $E_{T,s}$ for all $T>0$ and there exist $C>0$ and $\gamma>0$ such that for every $u_0 \in H^s(\mathcal M)$, the solution $u$ of  \eqref{eq:NonlinearSchrodinger_manifoldControlled} satisfies $\norme{u(t,\cdot)}_{H^s(\mathcal M)} \leq C e^{- \gamma t} \norme{u_0}_{H^s(\mathcal M)}$ for every  $t \geq 0$.\\





We consider $a \in C_c^{\infty}(\omega)$ such that $a(x) \geq a_0 > 0$ in $\widehat{\omega} \subset \subset \omega$ where $\omega$ is a nonempty open subset of $\mathcal M$. Local exact controllability in $H^1(\T)$ for \eqref{eq:NonlinearSchrodinger_manifoldControlled} has been first obtained in \cite{ILT03}. It has then been extended in $H^s(\T)$ in \cite{RZ09} for every $s \geq 0$ by using moments theory and Bourgain analysis for the treatment of the semilinearity seen as a small perturbation of the linear case. Note that the local stabilization with the feedback $h = - i a(x) u$ has also been obtained in \cite{RZ09}. This type of result has been generalized to any $d$-dimensional torus $\T^d$, $d \geq 2$, in \cite{RZ10}. In \cite{DGL06}, for $\mathcal M = \T^2$ or $\mathcal M = S^2$, the authors prove that \eqref{eq:NonlinearSchrodinger_manifoldControlled} for $\lambda >0$ is semiglobally exponentially stabilizable in $H^1(\mathcal M)$ with $h = a(x) (1- \Delta)^{-1} a(x) \partial_t u$ by using propagation of singularities and Strichartz-type estimates from \cite{BGT04}, assuming that $\omega$ contains the union of a neighborhood of the generator circle and a neighborhood of the largest exterior circle of $\T^2$ for $\mathcal M = \T^2$ or $\omega$ contains a neighborhood of the equator for $\mathcal M = S^2$. The authors in \cite{DGL06} also deduce that \eqref{eq:NonlinearSchrodinger_manifoldControlled} is semiglobally exactly controllable in $H^1(\mathcal M)$, by using the semiglobal stabilization and a local exact controllability result. This type of result has been generalized to the situation $\mathcal M = \T^3$, $\mathcal M = S^3$, with the same kind of assumptions, in \cite{Lau10}. It is worth mentioning that \cite{Lau10} uses the Bourgain analysis to handle the semilinearity and also proposes another approach for obtaining the semiglobal exact controllability. Furthermore, \cite{Lau10b} also obtains semiglobal controllablility and stabilizability results for  \eqref{eq:NonlinearSchrodinger_manifoldControlled}, both for focusing and defocusing cases, working at $L^2(\T)$-regularity, with the feedback $h=-i a(x)  u$. More recently, \cite{CCDCAR18} and \cite{YNC21} generalize among other things \cite{DGL06} and \cite{Lau10} with the feedback laws $h = -i a(x) (-\Delta)^{1/2} u$ and $h=-i a(x)u$. For a survey of these results up to 2014, one can read \cite{Lau14}.


\subsection{Main results}
The goal of this paper is to prove the global exponential stabilization of the equation \eqref{eq:NonlinearSchrodinger_manifoldControlled} on the specific case $\mathcal M = \T^d$ for $d \in \{1, 2, 3\}$. Before stating our main results, it is worth mentioning that this work only deals with such dimensions because of the well-posedness result of Proposition~\ref{prop:wellposedbourgain}, see below.

For $d \in \{1, 2, 3\}$, we now consider
\begin{equation}
	\label{eq:NonlinearSchrodinger_torusControlled}
		\left\{
			\begin{array}{ll}
				i  \partial_t u  = -\Delta u + |u|^{2} u + h 1_{\omega} & \text{ in }  (0,+\infty) \times \T^d, \\
				u(0, \cdot) = u_0 & \text{ in } \T^d.
			\end{array}
		\right.
\end{equation}
The stabilization property would be established in the energy space associated to the total energy, the sum of the $L^2$-energy and the nonlinear energy, 
\begin{equation}
\label{eq:totalenergy}
    E(t) = \underbrace{\frac{1}{2}\int_{\T^d} |u(t,x)|^2 dx}_{L^2-\text{energy}} +  \underbrace{\frac{1}{2}\int_{\T^d} |\nabla u(t,x)|^2 dx + \frac{1}{4} \int_{\T^d} |u(t,x)|^{4}  dx}_{\text{nonlinear energy}} \qquad \forall t \geq 0.
\end{equation}
Recall that for $h=0$ in \eqref{eq:NonlinearSchrodinger_torusControlled}, we formally have the conservation law
\begin{equation}
\label{eq:conservationwholeenergy}
     \frac{d}{dt} E(t)= 0 \qquad \forall t \geq 0,
\end{equation}
because the $L^2$-energy is conserved, see \eqref{eq:conservL2}, and the nonlinear energy is conserved, see \eqref{eq:conservEnergyH1}.

Let $\varepsilon \in (0, 2 \pi)$ and assume that $\omega$ is a (nonempty) open subset of $\T^d$ such that by denoting
\begin{equation}
    \label{eq:defIvarepsilon}
    I_{\varepsilon} = (0, \varepsilon) \cup (2 \pi - \varepsilon, 2 \pi) + 2 \pi \Z \subset \T,
\end{equation}
we have
\begin{equation}
    \label{eq:omegaproperties}
    	\left\{
		    \begin{array}{ll}
     \omega_{0} := I_{\varepsilon} \subset \omega & \text{when}\ d= 1,\\
      \omega_{0} := \left(I_{\varepsilon} \times \T\right) \cup \left(\T \times I_{\varepsilon} \right) \subset \omega & \text{when}\ d= 2,\\
      \omega_{0} := \left(I_{\varepsilon} \times \T^2\right) \cup \left(\T \times I_{\varepsilon} \times \T \right) \cup \left( \T^2 \times I_{\varepsilon} \right) \subset \omega & \text{when}\ d= 3,
     \end{array}
		\right.
\end{equation}
When $d=1$, up to a translation, \eqref{eq:omegaproperties} corresponds to the fact that $\omega$ contains an open interval, therefore, we can only assume that $\omega$ is a non-open open subset of $\T^1$. When $d=2$, \eqref{eq:omegaproperties} corresponds to the fact where $\omega$ contains an union of a neighborhood of the generator circle and a neighborhood of the largest exterior circle of $\T^2$. When $d=3$, \eqref{eq:omegaproperties} corresponds to the fact that $\omega$ contains a neighborhood of each face of the cube, the fundamental volume of $\T^3$. It is worth mentioning that such a $\omega$ satisfies in particular GCC.

We consider $a \in C_c^{\infty}(\omega)$ such that $a(x) \geq a_0 > 0$ in $\omega_{0}$. We then look at
\begin{equation}
	\label{eq:NonlinearSchrodingerDamping_torus}
		\left\{
			\begin{array}{ll}
				i  \partial_t u  = -\Delta u + |u|^{2} u - ia(x) u & \text{ in }  (0,+\infty) \times \T^d, \\
				u(0, \cdot) = u_0 & \text{ in } \T^d.
			\end{array}
		\right.
\end{equation}
More precisely, the Section~\ref{sec:wellposedness} is devoted to the well-posedness of \eqref{eq:NonlinearSchrodingerDamping_torus} in Bourgain spaces. 

The main result of the paper is the (uniform) global stabilization of \eqref{eq:NonlinearSchrodingerDamping_torus}.
\begin{theorem}\label{thm:stabilisation_main_result}
Let $d \in \{1, 2, 3\}$. There exist $C,\gamma>0$ such that for all $u_0 \in H^1(\T^d)$, the solution $u$ of \eqref{eq:NonlinearSchrodingerDamping_torus} belongs to $C([0,+\infty);H^1(\T^d))$ and satisfies
\begin{equation}
\label{eq:expdecay}
    E(t) \leq C e^{-\gamma t} E(0)\qquad \forall t \geq 0.
\end{equation}
\end{theorem}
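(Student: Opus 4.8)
The plan is to follow the classical route \emph{uniform observability $\Rightarrow$ exponential decay}, but carried out directly at the nonlinear level so that the resulting observability constant does not degenerate as $\norme{u_0}_{H^1(\T^d)}\to\infty$. This uniformity is exactly the feature that upgrades the semiglobal statements of \cite{DGL06,Lau10} to the global estimate \eqref{eq:expdecay}. The regularity $u\in C([0,+\infty);H^1(\T^d))$ is granted by the Bourgain-space well-posedness of Proposition~\ref{prop:wellposedbourgain}, so it remains to prove the decay.

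First I would write the dissipation identity for the total energy \eqref{eq:totalenergy}. Multiplying \eqref{eq:NonlinearSchrodingerDamping_torus} by $\ubar$ (resp. by $\partial_t\ubar$), taking imaginary (resp. real) parts and integrating by parts yields
\begin{equation*}
\frac{d}{dt}E(t)=-\int_{\T^d}a(x)\big(|u|^2+|\nabla u|^2+|u|^4\big)\,dx-\Reelle\int_{\T^d}u\,\nabla a\cdot\nabla\ubar\,dx,
\end{equation*}
where the last term is supported in $\omega$ and, being of lower order relative to the gradient observation, will be absorbed on the right-hand side of the observability inequality below. Integrating on $[0,T]$ shows that the dissipated energy $E(0)-E(T)$ equals a genuine observation of $u$ localized in $\omega$; I denote by $N(T):=\int_0^T\!\int_{\T^d}a(|u|^2+|\nabla u|^2+|u|^4)\,dx\,dt$ its principal part.

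The heart of the argument is the \emph{uniform observability estimate}: there exist $T>0$ and $C>0$, independent of $u_0$, with $E(0)\le C\,N(T)$. I would obtain it by combining three layers. At the bottom, the Carleman estimates of the paper, in which the cubic term $|u|^2u$ is incorporated into the conjugated operator rather than treated as a perturbation, provide a weighted spacetime inequality with constants that do not see the size of $u$; this is the decisive point distinguishing the present approach from the microlocal-plus-compactness-uniqueness scheme, whose constants necessarily depend on $\norme{u}_{H^1(\T^d)}$. On top of this, energy estimates transfer the weighted interior information to the full energy, while the Morawetz (virial) multipliers generate the spacetime control of the defocusing quartic term $\int_0^T\!\int|u|^4$ and propagate the gradient information out of $\omega_0$ to all of $\T^d$, using that $\omega_0$ in \eqref{eq:omegaproperties} satisfies GCC; Bourgain-space bounds are invoked to close the estimates on the nonlinear remainders. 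The main obstacle is precisely this size-independence: the entire difficulty of the global (rather than semiglobal) result is to prevent the observability constant from blowing up with the energy, which forces the Carleman weight to absorb the cubic nonlinearity exactly instead of bounding it a posteriori.

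Granting the uniform observability, the conclusion follows by a standard iteration. Since $E(0)-E(T)=N(T)+R(T)$ with $R(T)$ the $\nabla a$-remainder satisfying $|R(T)|\le C N(T)$, the bound $E(0)\le C\,N(T)$ gives $E(0)\le C'\big(E(0)-E(T)\big)$, hence $E(T)\le(1-1/C')\,E(0)$ with ratio in $(0,1)$. As $C'$ and $T$ are independent of the data, applying this on each window $[nT,(n+1)T]$ to the restarted solution yields $E(nT)\le(1-1/C')^n E(0)$. Combined with the crude growth bound $E(t)\le e^{CT}E(nT)$ for $t\in[nT,(n+1)T]$ (from $|\tfrac{d}{dt}E|\le CE$ and Grönwall), this produces the exponential decay \eqref{eq:expdecay}.
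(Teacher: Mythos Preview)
Your overall architecture is the paper's: uniform (size-independent) observability from the nonlinear Carleman estimate, followed by iteration on time windows. The well-posedness and the dissipation identity you write are correct, and you correctly identify that the crucial gain over \cite{DGL06,Lau10} is that the observability constant does not depend on $\norme{u_0}_{H^1(\T^d)}$.

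There is, however, a genuine gap in the closing step. You assert that the $\nabla a$-remainder obeys $|R(T)|\le C\,N(T)$ and that this, combined with $E(0)\le C\,N(T)$, yields $E(0)\le C'(E(0)-E(T))$. The first bound is indeed true (e.g.\ integrate by parts to $\tfrac12\int_0^T\!\int(\Delta a)|u|^2$ and apply the pointwise-in-time observability of Proposition~\ref{prop:observabilityinequality}), but the deduction fails: from $E(0)-E(T)=N(T)+R(T)$ one only gets $E(0)-E(T)\ge (1-C)\,N(T)$, which is vacuous unless $C<1$. Since the constant coming from the observability is $\exp(C(1+T^{-1}))$, no choice of $T$ makes it small. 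Equivalently, the remainder $u\,\nabla a\cdot\nabla\ubar$ is \emph{not} of lower order in any sense that lets it be dominated by $a|\nabla u|^2$: $\nabla a$ is supported precisely where $a$ degenerates.

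The paper closes this gap with three ingredients you did not invoke. First, the observability inequality (Corollary to Proposition~\ref{prop:observabilityinequality}) carries $\int_0^T E(t)\,dt$ on the left, not just $E(0)$; this extra term is there exactly to absorb small non-localized remainders. Second, Lemma~\ref{lem:majoration_derivee} gives $|\nabla a|^2\le C_\varepsilon\,a+\varepsilon$, so that after Young's inequality the remainder splits into a term controlled by $\int_0^T\!\int a|u|^2$ (which equals $\tfrac12(\norme{u(0)}_{L^2}^2-\norme{u(T)}_{L^2}^2)$ by the $L^2$-identity) and a term $\varepsilon\int_0^T E(t)\,dt$ that is absorbed on the left. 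Third, the residual $L^2$ difference is handled by passing to the modified energy $\tilde E(t)=E(t)+\tilde C_T\norme{u(t)}_{L^2}^2$, for which one obtains a clean contraction $\tilde E(T)\le\theta\,\tilde E(0)$ with $\theta\in(0,1)$, and then transfers back to $E$. Your sketch can be repaired along these lines; as written, the absorption step does not go through.

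A minor point: your description of the role of the Morawetz multiplier is slightly off. In the paper it is used with $P=a$ to rewrite $\int a\,\Im(u\,\partial_t\ubar)$ as $\int a(|\nabla u|^2+|u|^4)$ plus the $\nabla a$ term; it does not ``propagate gradient information out of $\omega_0$'' (that is the job of the Carleman estimate), nor does GCC enter at that stage.
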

From Theorem \ref{thm:stabilisation_main_result}, one can obtain an estimate of the minimal time of the null-controllability for \eqref{eq:NonlinearSchrodinger_torusControlled}.

From \cite[Theorem 0.2]{Lau10b} in $1$-d, \cite[Theorem 2]{DGL06} in $2$-d, \cite[Theorem 0.1]{Lau10} in $3$-d, we have that for every $u_0 \in H^1(\T^d)$, there exists a time $T>0$ and a control $h \in L^2(0,T;H^1(\T^d))$ such that the solution $u$ of \eqref{eq:NonlinearSchrodinger_torusControlled} belongs to $C([0,T];H^1(\T^d))$ and satisfies $u(T, \cdot) = 0$. So, one can define, for $u_0 \in H^1(\T^d)$, the associated minimal time of controllability, i.e.
\begin{equation}
    T(u_0) = \inf\{T>0\ ;\ \eqref{eq:NonlinearSchrodinger_torusControlled}\ \text{is null-controllable at time }T>0\ \text{from } u_0\}.
\end{equation}
Then, we define the following time of controllability 
\begin{equation}
    \tau(R) = \sup\{T(u_0)>0\ ;\ E(u_0) \leq R\}\qquad \forall R \geq 0.
\end{equation}
The second main result of the paper is an estimate from above of $\tau$.
\begin{theorem}
\label{thm:controlresult}
Let $d \in \{1, 2, 3\}$. There exist $C>0$ sufficiently large such that
\begin{equation}
    \tau(R) \leq C \log(R+1) \qquad \forall R \geq 0.
\end{equation}
\end{theorem}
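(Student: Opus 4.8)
The plan is to combine the global exponential stabilization from Theorem~\ref{thm:stabilisation_main_result} with a local null-controllability result, using the classical two-stage strategy: first drive the energy down to a small threshold by the damped dynamics, then finish the job with an exact control in a fixed short time. The logarithmic bound on $\tau(R)$ will emerge naturally because the stabilization phase decays exponentially, so reaching a fixed threshold $\delta$ from energy level $R$ costs only a time proportional to $\log(R/\delta)$.

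More precisely, fix $u_0 \in H^1(\T^d)$ with $E(u_0) \leq R$. First I would run the closed-loop damped equation \eqref{eq:NonlinearSchrodingerDamping_torus}, which corresponds to \eqref{eq:NonlinearSchrodinger_torusControlled} with the feedback control $h = -ia(x)u$. By Theorem~\ref{thm:stabilisation_main_result}, the total energy satisfies $E(t) \leq C e^{-\gamma t} E(0) \leq C e^{-\gamma t} R$. Let $\delta>0$ be the smallness threshold provided by the local null-controllability result (from \cite{RZ10} in the torus case, or the references cited in the excerpt), which guarantees that there is a fixed control time $T_0>0$ and, for any state $v$ with $E(v) \leq \delta$, a control steering \eqref{eq:NonlinearSchrodinger_torusControlled} from $v$ to $0$ in time $T_0$. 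I would then choose the stabilization time
\begin{equation}
    T_1 = \frac{1}{\gamma}\log\!\left(\frac{CR}{\delta}\right)_{+},
\end{equation}
so that $E(u(T_1)) \leq \delta$. Concatenating the feedback control on $[0,T_1]$ with the local control on $[T_1, T_1+T_0]$ yields an admissible control driving $u_0$ to $0$ in time $T_1 + T_0$. Hence $T(u_0) \leq T_1 + T_0 \leq \frac{1}{\gamma}\log(CR/\delta) + T_0$, and taking the supremum over $E(u_0) \leq R$ gives $\tau(R) \leq \frac{1}{\gamma}\log(CR/\delta) + T_0 \leq C' \log(R+1)$ for a suitable constant $C'$, after absorbing the additive constants into the logarithm.

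The main technical point to verify, rather than a genuine obstacle, is the compatibility of the two phases at the regularity level $H^1(\T^d)$: the stabilization result controls the total energy $E$, which is equivalent to the squared $H^1$-norm (up to the $L^4$ term, controlled by Gagliardo--Nirenberg and the conserved $L^2$-norm), so $E(u(T_1)) \leq \delta$ indeed translates into an $H^1$-smallness of the intermediate state $u(T_1)$ suitable for the local result. I would also note that the local null-controllability results cited are stated with a fixed time $T_0$ independent of the (small) data, which is essential for the additive constant not to degrade the logarithmic rate. Finally, one must check that the feedback phase does not increase the energy, which is immediate from the damping structure since $a(x)\geq 0$, so $E$ is nonincreasing along \eqref{eq:NonlinearSchrodingerDamping_torus} and the estimate of Theorem~\ref{thm:stabilisation_main_result} applies directly.
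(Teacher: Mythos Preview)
Your proposal is correct and follows essentially the same two-stage strategy as the paper: run the damped equation \eqref{eq:NonlinearSchrodingerDamping_torus} as a feedback control until the energy drops below a fixed threshold, which costs time $O(\log R)$ by Theorem~\ref{thm:stabilisation_main_result}, then invoke a local null-controllability result in a fixed additional time (the paper proves its own such result, Proposition~\ref{prop:local_control}, rather than citing one). One small caveat: your final remark that $E$ is nonincreasing along \eqref{eq:NonlinearSchrodingerDamping_torus} is not actually established in the paper---only the $L^2$-part is manifestly monotone, while the nonlinear-energy identity \eqref{eq:energy_identity} involves $\int a\,\Im(u\partial_t\bar u)$, which has no obvious sign---but this is harmless since you never use monotonicity, only the exponential decay from Theorem~\ref{thm:stabilisation_main_result}.
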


\textbf{Comments.} Theorems \ref{thm:stabilisation_main_result} and \ref{thm:controlresult} differ from the existing literature. Indeed, Theorem \ref{thm:stabilisation_main_result} states the (uniform) global exponential stabilization of \eqref{eq:NonlinearSchrodinger_torusControlled} by the feedback $h = - i a(x)u$ in the energy space while Theorem \ref{thm:controlresult} states the global null-controllability of \eqref{eq:NonlinearSchrodinger_torusControlled} with an explicit estimate of the (possible) minimal time in function of the size of the initial data in the energy space. Up to our knowledge, they are the first results in this direction for nonlinear Schrödinger equations. The key point is hidden in the exponential decay \eqref{eq:expdecay} where the constants $C>0$, $\gamma>0$ do not depend on the initial data, but only on the geometry of the torus $\T^d$ and the observation set $\omega$. In particular, this estimate answers by the affirmative the open problem stated in \cite[Remark 1]{DGL06}. 

For proving Theorem \ref{thm:stabilisation_main_result}, we develop a new method in comparison to the above mentioned references. Our strategy is based on new quantitative observability inequalities for the cubic defocusing Schrödinger equation with the internal damping \eqref{eq:NonlinearSchrodingerDamping_torus}. The first ingredient for obtaining such inequalities is a new Carleman estimates for 
\eqref{eq:NonlinearSchrodingerDamping_torus}. Instead of seeing $|u|^2 u$ as $V(t,x)u$ with $V$ a time/space-dependent potential then performing a Carleman estimate in a linear Schrödinger-type equation, we directly include the cubic semilinearity in the symmetric part of the Carleman conjugated operator. The internal linear damping $-ia(x) u$ is then treated as a (local) source term that can be absorbed in the Carleman estimates. First, note that such a strategy only enables us to treat defocusing cases, that is strongly in contrast with \cite{Lau10b} where the author can manage to deal with focusing cases. Second, one cannot handle internal linear dampings like $a(x) (1- \Delta)^{-1} a(x) \partial_t u$ or $-i a(x) (-\Delta)^{1/2} u$ that are nonlocal. The second ingredient is a combinaison of energy estimates and Morawetz multipliers method for obtaining the exponential decay of the energy of the solution to the damped equation. As a consequence, we completely bypass the classical use of propagation of compactness-regularity for tackling such a question. We strongly believe that such a method can have other applications to the problem of exponential stabilization of partial differential equations, for instance for semilinear wave equations as considered in \cite{DLZ03}. 

The proof of Theorem \ref{thm:controlresult} is a corollary of the global exponential stabilization from Theorem \ref{thm:stabilisation_main_result} and local controllability results from \cite[Theorem 3.2]{Lau10b} in $1$-d, \cite[Proof of Theorem 2]{DGL06} in $2$-d, \cite[Theorem 0.3]{Lau10} in $3$-d.\\

\textbf{Extensions.} Our main results, i.e. Theorems \ref{thm:stabilisation_main_result} and \ref{thm:controlresult}, can be extended into two directions, that are the geometry of $(\mathcal M, \omega)$ and the semilinearity. For the first point, the key tool is the adaptation of the Carleman estimate in such a setting. For the second point, the key ingredient is the well-posedness of the Cauchy problem \eqref{eq:NonlinearSchrodinger_manifold} in $C([0,+\infty);H^s(\mathcal M))$ for every $s \geq 1$. We mention below the following situations that can be treated with our method:
\begin{itemize}
    \item $d=1$, $\mathcal{M} = \T$, $\omega$ a nonempty open subset of $\T$, replacing $|u|^2 u$ by $|u|^{p-1} u$ for every $p>1$, see \cite[Theorem 2.3]{Bou99} for the well-posedness,
    \item $d=2$, $\mathcal{M} = \T^2$, $\omega$ as in \eqref{eq:omegaproperties}, replacing $|u|^2 u$ by $|u|^{p-1} u$ for every $p>1$, see \cite[Chapter V]{Bou99} for the well-posedness,
    \item $d=2$, $\mathcal{M} = S^2$, $\omega$ a nonempty open subset of $S^2$ containing a neighborhood of $\{x_3 = 0\} \subset \R^3$, replacing $|u|^2 u$ by $|u|^{p-1} u$ for every $p>1$, see \cite[Section 2]{DGL06} for the well-posedness and \cite[Appendix B.1]{Lau10} for the Carleman part,
    \item $d=3$, $\mathcal{M} = \T^3$, $\omega$ as in \eqref{eq:omegaproperties}, replacing $|u|^2 u$ by $|u|^{p-1} u$ for every $p \in (1,5)$, see \cite[Chapter V]{Bou99} for the well-posedness, 
    \item $d=3$, $\mathcal{M} = S^3$, $\omega$ a nonempty open subset of $S^2$ containing a neighborhood of $\{x_4 = 0\} \subset \R^3$, replacing $|u|^2 u$ by $|u|^{p-1} u$ for every $p \in (1,5)$, see \cite[Theorem 1]{BGT05b} for the well-posedness and \cite[Appendix B.1]{Lau10} for the Carleman part.
\end{itemize}
However, we decide for simplicity to focus on the toy model of the cubic defocusing Schrödinger equation on the $d$-dimensional torus.\\

%

\textbf{Open questions. }We finish this part by mentioning some open problems related to Theorems \ref{thm:stabilisation_main_result} and \ref{thm:controlresult}.

From \eqref{eq:expdecay} and the Sobolev embedding $H^1(\T^d) \hookrightarrow L^4(\T^d)$, we can deduce that there exist $C>0$ and $\gamma>0$ such that for every $u_0 \in H^1(\T^d)$, the solution $u$ of \eqref{eq:NonlinearSchrodingerDamping_torus} satisfies 
\begin{equation}
    \label{eq:expdecayh1}
    \norme{u(t,\cdot)}_{H^1(\T^d)}^2 \leq C e^{- \gamma t} \left(\norme{u_0}_{H^1(\T^d)}^2 + \norme{u(t,\cdot)}_{H^1(\T^d)}^4\right),
\end{equation}
But, we do not know if \eqref{eq:expdecayh1} can be replaced with $\norme{u(t,\cdot)}_{H^1(\T^d)}^2 \leq C e^{- \gamma t} \norme{u_0}_{H^1(\T^d)}^2$. Last but not least, from \eqref{eq:expdecayh1}, we obtain in particular the exponential decay of the $L^2$-energy of the solution, but for $H^1(\T^d)$-initial data. In the $1$-d case, where \eqref{eq:NonlinearSchrodingerDamping_torus} is well-posed for initial data in $L^2(\T)$, we may wonder in the spirit of \cite{Lau10b} if $\norme{u(t,\cdot)}_{L^2(\T)}^2 \leq C e^{- \gamma t} \norme{u_0}_{L^2(\T)}^2$, holds true.

In comparison to what is known for the linear case, where an arbitrary open set of the torus $\T^d$ is sufficient for the control, we may wonder to what extent one can weaken the geometrical assumption on $\omega$, that satisfies \eqref{eq:omegaproperties} in our situation. Actually, in the Carleman part, one can consider $\omega$, containing only a neighborhood of the generator circle in $2$-d by taking Carleman weights satisfying only weak pseudoconvexity assumptions as done in \cite{MOR08}. However, this only leads to an observability inequality with a $L^2(\T^d)$-left hand side. This is not sufficient for obtaining the exponential decay of the total energy with our multipliers strategy. 


On the other hand, as said previously, one can extend our main results to the subcritical semilinearities, i.e. $|u|^{p-1} u$ for $p \in (1,5)$ in $3$-d by using the corresponding well-posedness results. Concerning the critical case, it is known from \cite{IP12} that \eqref{eq:NonlinearSchrodinger_manifold} replacing the cubic semilinearity $|u|^{2} u$ by the quintic semilinearity $|u|^{4}u$ is globally well-posed in $H^1(\T^3)$. An interesting open question to adapt our strategy is to prove that the closed-loop equation \eqref{eq:NonlinearSchrodingerDamping_torus} replacing $|u|^{2} u$ by  $|u|^{4}u$ is globally well-posed in $H^s(\T^3)$ for every $s \geq 1$. It seems that it is probably the case but details remain to be written.

Finally, concerning the null-controllability adressed in Theorem \ref{thm:controlresult}, one may ask the question of the uniform large-time controllability, respectively small-time null-controllability, that is there exists a time $T>0$, respectively for every time $T>0$, for every initial data $u_0 \in H^1(\T^d)$, \eqref{eq:NonlinearSchrodinger_torusControlled} is null-controllable at time $T>0$ from $u_0$.

\subsection{Organization of the paper}

The article is organized as follows. In Section \ref{sec:wellposedness}, we prove the well-posedness of \eqref{eq:NonlinearSchrodingerDamping_torus} for initial data in $H^s(\T^d)$ and source terms in $L^2(0,T;H^s(\T^d))$ for $s \geq 1$ with the use of Bourgain spaces. In Section \ref{sec:carlemanpart}, we prove new Carleman estimates then quantitative observability estimates for \eqref{eq:NonlinearSchrodingerDamping_torus} by the use of energy estimates and Morawetz multipliers. In Section \ref{sec:proofmainresults}, we prove the main results of the paper Theorems \ref{thm:stabilisation_main_result} and \ref{thm:controlresult}, in Subsection \ref{sec:expdecay}, we deduce the exponential decay of the total energy of the solution to \eqref{eq:NonlinearSchrodingerDamping_torus} then in Subsection \ref{sec:controlresults}, we obtain the upper bound on the minimal time of the global null-controllability of \eqref{eq:NonlinearSchrodinger_torusControlled}.

\section{Well-posedness results}
\label{sec:wellposedness}

Let $d \in \{1, 2, 3\}$. The goal of this part would be to establish the well-posedness in Bourgain spaces $X_{T}^{s,b}$ for Cauchy problems associated to 
\begin{equation}
	\label{eq:NonlinearSchrodingerDamping_torusg}
		\left\{
			\begin{array}{ll}
				i  \partial_t u  = -\Delta u + |u|^{2} u - ia(x) u + g & \text{ in }  (0,+\infty) \times \T^d, \\
				u(0, \cdot) = u_0 & \text{ in } \T^d.
			\end{array}
		\right.
\end{equation}
where $T>0$, $s \geq 1$, $b \in (1/2,1)$ (depending on $d$ and $s$), $a \in C^{\infty}(\T^d;\R)$, $u_0 \in H^s$, $g \in L^2(0,T;H^s(\T^d))$. Of course, this is an adaptation of the breakthrough idea introduced in \cite{Bou93}, see for instance \cite[Chapter 5]{Bou99} for a detailed account of these techniques. However because we are not able to find the exact result we needed in the literature, we present here the main steps of the proofs for obtaining such a result. We will mainly follow and adapt the presentation given by \cite[Section 1 and Section 2]{Lau10} for the treatment of the $3$-d case to our $d$-dimensional case, $d \in \{1,2,3\}$. Note that in $1$-d, one can directly use \cite[Section 1 and 2]{Lau10b} for this part because the damping terms that we are considering are the same.

In the first part, we introduce the so-called Bourgain spaces $X_{T}^{s,b}$, recall their main properties and present trilinear estimates that will be one of the key point for the well-posedness. In the second part, we present a priori energy identities and estimates for solutions to \eqref{eq:NonlinearSchrodingerDamping_torusg}. In the last part, we state the main result of this part, i.e. Proposition \ref{prop:wellposedbourgain}, that establishes the well-posedness of \eqref{eq:NonlinearSchrodingerDamping_torusg} in $H^s(\T^d)$ for $s \geq 1$. 

\subsection{Some properties of $X^{s,b}$ spaces}

The goal of this part is to state well-posedness results for \eqref{eq:NonlinearSchrodingerDamping_torus} in Sobolev spaces $H^s(\T^d)$. As said previously, it is an adaptation of the idea introduced in \cite{Bou93}, see for instance \cite[Chapter 5]{Bou99} for a detailed account of these techniques. 


In all the following, we use the notation
\begin{equation*}
    \langle x \rangle  = \sqrt{1+x^2}\qquad \forall x \in \R.
\end{equation*}

For $s \in \R$, we equip the Sobolev space $H^s(\T^d)$ with the norm
\begin{equation}
    \label{eq:definitionnormHs}
\norme{u}_{H^s(\T^d)}^2 = \sum_{k \in \Z^d} \langle |k| \rangle^{2s}  |\hat{u}(k)|^2\qquad \forall u \in H^s(\T^d).
\end{equation}
For $s, b \in \R$, the Bourgain space is equipped with the norm
\begin{equation}
    \label{eq:definitionnormeXsb}
    \norme{u}_{X^{s,b}}^2 = \sum_{k \in \Z^d} \int_{\R} \langle |k| \rangle^{2s} \langle \tau+|k|^2 \rangle^{2b} |\hat{\hat{u}}(\tau,k)|^2 d\tau = \norme{u^{\sharp}}_{H^b(\R;H^s(\T^d))}^2,
\end{equation}
where $u^{\sharp}(t) = e^{-it \Delta} u(t)$ and $\hat{\hat{u}}(\tau,k)$ denotes the Fourier transform with respect to the time variable and the spatial variable.

For $T>0$, the restricted Bourgain space $X_T^{s,b}$ is the associated restriction space with the norm
\begin{equation}
    \label{eq:bourgainrestriction}
    \norme{u}_{X_{T}^{s,b}} = \inf\{ \norme{\tilde{u}}_{X^{s,b}}\ ;\ \tilde{u} = u\ \text{in}\ (0,T)\times \T^d\}.
\end{equation}
More generally, for $I$ an interval in $\R$, one can define $X_{I}^{s,b}$ the associated restriction space.\\

The following properties of the Bourgain spaces are straightforward.\\
$\bullet$ The Bourgain spaces $X^{s,b}$ and $X_T^{s,b}$ are Hilbert spaces.\\
$\bullet$ If $s_1 \leq s_2$, $b_1 \leq b_2$, $X^{s_2,b_2}$ is continously embedded in $X^{s_1,b_1}$.\\
$\bullet$ For $b >1/2$, $X_T^{s,b}$ is continously embedded in $C([0,T];H^s(\T^d))$.\\
$\bullet$ For every $s_1< s_2$, $b_1 < b_2$, $X_T^{s_2,b_2}$ is compactly embedded in $X_T^{s_1,b_1}$.\\
$\bullet$ The dual of $X_{T}^{s,b}$ is $X_{T}^{-s,-b}$. \\
$\bullet$ For $\theta \in (0,1)$, the complex interpolation space $(X^{s_1,b_1}, X^{s_2,b_2})_{\theta}$ is $ X^{(1-\theta)s_1 + \theta s_2, (1-\theta)b_1 + \theta b_2}$.\\
$\bullet$ If $s \in \rr$, $b \in (\frac 12, 1)$, $0< T_1 <T_2$, $u_1 \in X^{s,b}_{(0,T_1)}$ and $u_2 \in X^{s, b}_{(T_1, T_2)}$ with $u_1(T_1)=u_2(T_1)$, then the function $u$ defined by $u(t,\cdot)=\left\{\begin{array}{ll}
u_1(t,\cdot), & t\in [0,T_1]\\
u_2(t, \cdot), & t \in [T_1,T_2]
\end{array} \right.$ belongs to $X^{s,b}_{(0,T_2)}$.

The following result studies the stability of the Bourgain spaces with respect to multiplication operators.
\begin{lemma}
\label{lem:mutiplication}
Let $\varphi \in C_c^{\infty}(\R)$, $\psi \in C^{\infty}(\T^d)$, $s \in \R$, $b \in [-1,1]$, $T>0$. The following linear mappings 
\begin{align}
    &\Phi : u \in X^{s,b} \mapsto \varphi(t) u \in X^{s,b},\qquad \Phi_T : u \in X_T^{s,b} \mapsto \varphi(t) u \in X_T^{s,b},\label{eq:multiplicationtemps}\\
    &\Psi : u \in X^{s,b} \mapsto \psi(x) u \in X^{s-|b|,b},\qquad \Psi_T : u \in X_T^{s,b} \mapsto \psi(x) u \in X_T^{s-|b|,b}, \label{eq:multiplicationspatial}
\end{align}
are continuous.
\end{lemma}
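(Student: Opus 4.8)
The lemma concerns stability of Bourgain spaces under multiplication.

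Two mappings:
1. Time multiplication $\Phi: u \mapsto \varphi(t)u$ maps $X^{s,b} \to X^{s,b}$ (same indices)
2. Space multiplication $\Psi: u \mapsto \psi(x)u$ maps $X^{s,b} \to X^{s-|b|,b}$ (loses $|b|$ derivatives in $s$)

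Let me think about the structure.\section{Proof plan}

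The plan is to treat the two mappings separately, and for each to reduce from the restriction spaces $X_T^{s,b}$ to the ambient spaces $X^{s,b}$, since the restriction-space bounds follow immediately from the definition \eqref{eq:bourgainrestriction}: if $\tilde u$ extends $u$ and realizes (up to $\varepsilon$) the infimum defining $\norme{u}_{X_T^{s,b}}$, then $\varphi(t)\tilde u$ (resp.\ $\psi(x)\tilde u$) extends $\varphi(t)u$ (resp.\ $\psi(x)u$), so a bound on the ambient operator passes to the restricted one with the same constant. Hence it suffices to prove continuity of $\Phi$ and $\Psi$ on $X^{s,b}$.

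For the time-multiplication operator $\Phi$, the key observation is the identity $(\varphi(t)u)^{\sharp}(t) = e^{-it\Delta}(\varphi(t)u(t)) = \varphi(t)\, u^{\sharp}(t)$, because $\varphi(t)$ is a scalar commuting with $e^{-it\Delta}$. Thus in view of the second expression in \eqref{eq:definitionnormeXsb}, the claim reduces to showing that multiplication by $\varphi \in C_c^\infty(\R)$ is bounded on $H^b(\R; H^s(\T^d))$ for $b \in [-1,1]$. For integer-type regularity this is the standard fact that a smooth compactly supported function is a pointwise multiplier on $H^b(\R)$; I would establish it for $b \in [0,1]$ first (for $b=0$ it is trivial with constant $\norme{\varphi}_{L^\infty}$, for $b=1$ one uses the Leibniz rule and $\norme{\varphi'}_{L^\infty}$, and $b\in(0,1)$ follows by the interpolation property listed in the excerpt), and then obtain the negative range $b \in [-1,0]$ by the duality $(X^{s,b})' = X^{-s,-b}$, again listed above, since multiplication by the real-valued $\varphi$ is formally self-adjoint.

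For the space-multiplication operator $\Psi$, the loss of $|b|$ derivatives is the genuine content. Here $\psi(x)$ does \emph{not} commute with $e^{-it\Delta}$, so the clean $\sharp$-reduction fails and one must work on the Fourier side in \eqref{eq:definitionnormeXsb}. The plan is again to handle $b=0$, $b=1$ and then interpolate/dualize. For $b=0$ the space $X^{s,0} = L^2(\R; H^s(\T^d))$ and multiplication by $\psi$ loses nothing, consistent with $|b|=0$. The crux is $b=1$: one writes $\partial_t(\psi u) = \psi\,\partial_t u$ and uses the equation-free identity $\tau + |k|^2$ appearing in the weight; the point is that convolution in $k$ by $\hat\psi$ (the effect of multiplying by $\psi(x)$) shifts the frequency $k$, and the mismatch between the weight $\langle \tau+|k|^2\rangle$ before and after the shift is controlled by $\langle |k|\rangle$ to the first power, which is exactly the one-derivative loss. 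Concretely, for $\psi u$ one estimates $\langle \tau + |k|^2\rangle$ against $\langle \tau + |k'|^2\rangle + \langle |k|\rangle\langle|k-k'|\rangle$ after the convolution, using the rapid decay of $\hat\psi(k-k')$ from $\psi \in C^\infty(\T^d)$ to sum the resulting series.

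I expect the $b=1$ space-multiplication bound to be the main obstacle: one must carefully track how the modulation weight $\langle\tau+|k|^2\rangle$ transforms under the frequency shift $k \mapsto k'$ induced by $\psi$, bound $|\,|k|^2 - |k'|^2| \lesssim \langle|k|\rangle\langle |k-k'|\rangle$, and absorb the extra factor $\langle|k|\rangle$ as precisely the $s \mapsto s-|b|$ loss, while the rapid decay of the Fourier coefficients of $\psi$ makes the convolution sum (via a Schur-test or Young-type argument) converge. Once $b=0$ and $b=1$ are in hand, the intermediate $b\in(0,1)$ follows from complex interpolation and the negative range $b\in[-1,0]$ from the duality $(X^{s,b})'=X^{-s,-b}$, noting that multiplying by the real smooth $\psi$ is self-adjoint, so that the adjoint of $\Psi: X^{s,b}\to X^{s-|b|,b}$ maps $X^{-s+|b|,-b}\to X^{-s,-b}$, which is of the same form with $b$ replaced by $-b$.
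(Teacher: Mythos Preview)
Your overall architecture matches the paper's: reduce from $X_T^{s,b}$ to $X^{s,b}$ via extensions, handle the time multiplier by the commutation $(\varphi u)^\sharp=\varphi\,u^\sharp$ and the standard $H^b(\R)$ multiplier property of $\varphi$, and for the spatial multiplier treat the endpoints $b=0$ and $b=1$ explicitly, then close by complex interpolation and duality. That is exactly the paper's scheme.

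The one substantive difference is how you handle the $b=1$ case for $\Psi$. You propose a direct Fourier-side computation: write $\widehat{\widehat{\psi u}}(\tau,k)=\sum_{k'}\hat\psi(k-k')\,\widehat{\widehat{u}}(\tau,k')$, bound the modulation mismatch by $\big|\,|k|^2-|k'|^2\big|\lesssim\langle|k-k'|\rangle\langle|k|\rangle$ (the extra powers of $\langle|k-k'|\rangle$ being harmless thanks to the rapid decay of $\hat\psi$), and finish with a Schur/Young argument. The paper takes a cleaner route: it uses the equivalent norm
\[
\norme{u}_{X^{s,1}}^2=\norme{u}_{L^2(\R;H^s)}^2+\norme{(i\partial_t+\Delta)u}_{L^2(\R;H^s)}^2,
\]
writes $(i\partial_t+\Delta)(\psi u)=\psi(i\partial_t+\Delta)u+[\Delta,\psi]u$, and reads off the one-derivative loss from the fact that $[\Delta,\psi]$ is a first-order differential operator. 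Your route is correct and more hands-on (it requires no alternate characterization of $X^{s,1}$); the paper's commutator argument is shorter and makes the origin of the loss $s\mapsto s-1$ transparent.

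One minor remark: you appeal to self-adjointness of multiplication by ``real'' $\varphi$ and $\psi$, but the statement does not assume real-valuedness. This is harmless, since the adjoint of multiplication by $\varphi$ (resp.\ $\psi$) is multiplication by $\bar\varphi\in C_c^\infty(\R)$ (resp.\ $\bar\psi\in C^\infty(\T^d)$), and the same bounds apply.
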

\begin{proof}
We only prove the first parts of \eqref{eq:multiplicationtemps} and \eqref{eq:multiplicationspatial}.

By using the commutation of $e^{-it\Delta}$ with $\varphi(t)$, we have
\begin{equation}
    \left\|\varphi u\right\|_{X^{s,b}}= \left\|e^{-it\Delta}[\varphi(t)u]\right\|_{H_t^{b}(H_x^s)}= \left\|\varphi u^{\#}\right\|_{H_t^{b}(H_x^s)}\leq C  \left\|u^{\#}\right\|_{H_t^{b}(H_x^s)}\leq C\left\|u\right\|_{X^{s,b}},
\end{equation}
which concludes the proof of \eqref{eq:multiplicationtemps}.

For \eqref{eq:multiplicationspatial}, we first treat the two cases $b=0$ and $b=1$.

For $b=0$, $X^{s,0}=L^2(\R,H^s)$ and the result is obvious.

For $b=1$, we have $u\in X^{s,1}$ if and only if $u \in L^2(\R,H^s) \textnormal{ and } i\partial_t u+\Delta u \in L^2(\R,H^s)$, with the norm
 $$ \left\| u\right\|^2_{X^{s,1}}= \left\|u\right\|^2_{L^2(\R,H^s)}+\left\|i\partial_t u+\Delta u\right\|^2_{L^2(\R,H^s)}.$$
Then, we have, by using that the commutator $[\psi,\Delta ]$ is an operator of order $1$ in space,
\begin{align*}
   & \left\|\psi(x) u\right\|^2_{X^{s-1,1}}=\left\|\psi u\right\|^2_{L^2(\R,H^{s-1})}+ \left\|i\partial_t (\psi u)+\Delta(\psi u)\right\|^2_{L^2(\R,H^{s-1})}\\
    &\leq C\left(\left\|u\right\|^2_{L^2(\R,H^{s-1})}+ \left\|\psi \left(i\partial_t u+\Delta u\right)\right\|^2_{L^2(\R,H^{s-1})}\right.\left.+ \left\|\left[\psi,\Delta \right]u\right\|^2_{L^2(\R,H^{s-1})}\right)\\
    &\leq C\left(\left\|u\right\|^2_{L^2(\R,H^{s-1})}+ \left\|i\partial_t u+\partial_x^2 u\right\|^2_{L^2(\R,H^{s-1})}+ \left\|u\right\|^2_{L^2(\R,H^{s})}\right)\\
    &\leq C\left\|u\right\|^2_{X^{s,1}},
\end{align*}
that concludes the proof in the case $b=1$.

We finally conclude by interpolation and duality. 
\end{proof}

The following elementary lemma holds, see \cite[Lemma 3.2]{Gin96}.
\begin{lemma}
\label{lem:OneDEstimateSobolev}
Let $\varphi \in C_c^{\infty}(\R)$, $b, b' \in \R$ such that $0 < b' < 1/2 < b,\ b+b' \leq 1$, $T>0$.

If $f \in H^{-b'}(\R)$, then
\begin{equation}
\label{eq:estimationelementaryoneD}
    \norme{t \mapsto \varphi\left(\frac{t}{T} \right)\int_0^{t} f(\tau) d \tau}_{H^b(\R)} \leq C T^{1-b-b'} \norme{f}_{H^{-b'}(\R)}.
\end{equation}
\end{lemma}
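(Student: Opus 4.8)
The plan is to work on the Fourier side in the time variable, writing $\widehat{f}(\xi)$ for the Fourier transform of $f$ and using the elementary identity $\int_0^t f(\tau)\,d\tau = c\int_{\R}\widehat{f}(\xi)\,\frac{e^{it\xi}-1}{i\xi}\,d\xi$. Set $\chi(t)=\varphi(t/T)$; since $\mathrm{supp}\,\varphi$ is compact, $\chi$ is supported in $|t|\lesssim T$, and by a direct computation on the Fourier side one has the scaling bound $\norme{\psi(\cdot/T)}_{H^b(\R)}\lesssim T^{1/2-b}\norme{\psi}_{H^b(\R)}$ for every $\psi\in C_c^\infty(\R)$ and every $0<T\le1$ (and more generally $\norme{\langle\cdot\rangle^b\widehat{\chi}}_{L^1}\lesssim T^{-b}$ while $\norme{\widehat{\chi}}_{L^1}=\norme{\widehat{\varphi}}_{L^1}$). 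I would first reduce to the range $0<T\le1$, which is the one used in the applications, the case $T\ge1$ being easier. The core of the argument is then to split $f=f_{\mathrm{lo}}+f_{\mathrm{hi}}$ according to $|\xi|\le1/T$ and $|\xi|>1/T$, and to estimate the two contributions to $\chi(t)\int_0^t f$ separately.

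For the low-frequency part I would Taylor expand the multiplier, $\frac{e^{it\xi}-1}{i\xi}=\sum_{n\ge1}\frac{(i\xi)^{n-1}}{n!}t^n$, which is legitimate here because on $\mathrm{supp}\,\chi$ and on $|\xi|\le1/T$ one has $|t\xi|\lesssim1$. This turns $\chi(t)\int_0^t f_{\mathrm{lo}}$ into a series $\sum_{n\ge1} c_n\,[\chi(t)t^n]$, where $c_n=\frac{i^{n-1}}{n!\,\sqrt{2\pi}}\int_{|\xi|\le1/T}\xi^{n-1}\widehat{f}(\xi)\,d\xi$. Writing $\chi(t)t^n=T^n(t/T)^n\varphi(t/T)$ and applying the scaling bound to $\psi_n(s)=s^n\varphi(s)$ gives $\norme{\chi t^n}_{H^b}\lesssim T^{n+1/2-b}\norme{\psi_n}_{H^b}$, while Cauchy–Schwarz against $\langle\xi\rangle^{-b'}$ on $|\xi|\le1/T$ gives $|c_n|\lesssim \frac{1}{n!\sqrt{2n-1}}\,T^{1/2-n-b'}\norme{f}_{H^{-b'}}$. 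Each term is therefore $\lesssim T^{1-b-b'}\,\frac{\norme{\psi_n}_{H^b}}{n!\sqrt{2n-1}}\norme{f}_{H^{-b'}}$, and since $\norme{\psi_n}_{H^b}\lesssim (n+1)C_\varphi^n$ for a compactly supported $\varphi$, the series converges and the whole low-frequency contribution is bounded by $C\,T^{1-b-b'}\norme{f}_{H^{-b'}}$.

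For the high-frequency part I would use $\frac{e^{it\xi}-1}{i\xi}=\frac{e^{it\xi}}{i\xi}-\frac1{i\xi}$. The second piece contributes $\chi(t)$ times the constant $d=c\int_{|\xi|>1/T}\widehat{f}(\xi)/(i\xi)\,d\xi$; since $b'<1/2$ makes $\int_{|\xi|>1/T}\langle\xi\rangle^{2b'}\xi^{-2}\,d\xi\lesssim T^{1-2b'}$ convergent, one gets $|d|\lesssim T^{1/2-b'}\norme{f}_{H^{-b'}}$, and combined with $\norme{\chi}_{H^b}\lesssim T^{1/2-b}$ this yields the desired power $T^{1-b-b'}$. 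The first piece is $\chi g$ with $\widehat{g}(\xi)=\widehat{f}(\xi)(i\xi)^{-1}\mathbf 1_{|\xi|>1/T}$, for which the elementary bounds $\norme{g}_{H^b}\lesssim T^{1-b-b'}\norme{f}_{H^{-b'}}$ and $\norme{g}_{L^2}\lesssim T^{1-b'}\norme{f}_{H^{-b'}}$ follow from $\sup_{|\xi|>1/T}\langle\xi\rangle^{2b+2b'}\xi^{-2}\lesssim T^{2-2b-2b'}$ (here $b+b'\le1$ is exactly what is needed) and its $b=0$ analogue. The remaining point is to estimate $\norme{\chi g}_{H^b}$ without losing powers of $T$: writing $\widehat{\chi g}=c\,\widehat{\chi}*\widehat{g}$ and splitting the convolution into the region $|\xi-\eta|\le|\eta|/2$, where $\langle\xi\rangle\approx\langle\eta\rangle$ and Young's inequality with $\norme{\widehat{\chi}}_{L^1}=\norme{\widehat{\varphi}}_{L^1}$ gives a clean bound by $\norme{g}_{H^b}$, and its complement, where $\langle\xi\rangle\lesssim\langle\xi-\eta\rangle$ and Young with $\norme{\langle\cdot\rangle^b\widehat{\chi}}_{L^1}\lesssim T^{-b}$ gives $T^{-b}\norme{g}_{L^2}$, one obtains $\norme{\chi g}_{H^b}\lesssim\norme{g}_{H^b}+T^{-b}\norme{g}_{L^2}\lesssim T^{1-b-b'}\norme{f}_{H^{-b'}}$.

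I expect the main obstacle to be precisely this last multiplication estimate: multiplication by the sharply localized cutoff $\varphi(\cdot/T)$ is \emph{not} uniformly bounded on $H^b(\R)$ for $b>0$, so a crude use of the Peetre inequality would lose a factor $T^{-b}$ and produce the wrong power $T^{1-2b-b'}$. The frequency-separation argument above (distinguishing whether the cutoff frequency is smaller or larger than that of $g$, and exploiting the rapid decay of $\widehat{\varphi}$ together with $\norme{g}_{L^2}\lesssim T^{1-b'}\norme{f}_{H^{-b'}}$) is what recovers the sharp exponent and is the only genuinely delicate step; the low-frequency series and the constant term are routine once the scaling bound $\norme{\psi(\cdot/T)}_{H^b}\lesssim T^{1/2-b}$ is in hand.
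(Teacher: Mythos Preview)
The paper does not prove this lemma; it simply records it and cites \cite[Lemma~3.2]{Gin96}. Your Fourier-side argument (splitting $\widehat f$ at $|\xi|=1/T$, Taylor expanding the multiplier $(e^{it\xi}-1)/(i\xi)$ at low frequency together with the scaling bound $\norme{\psi(\cdot/T)}_{H^b}\lesssim T^{1/2-b}$, and at high frequency decomposing $(e^{it\xi}-1)/(i\xi)=e^{it\xi}/(i\xi)-1/(i\xi)$ and controlling the product $\chi g$ via the frequency-separated convolution estimate $\norme{\chi g}_{H^b}\lesssim \norme{g}_{H^b}+T^{-b}\norme{g}_{L^2}$) is correct and is essentially the classical proof found in Ginibre's notes; the restriction to $0<T\le 1$ you make is harmless since this is the only regime used in the applications (cf.\ Proposition~\ref{prop:wellposedbourgain}).
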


One of the key point for establishing well-posedness results associated to the cubic defocusing nonlinear Schrödinger equation consists in establishing the following trilinear estimates.
\begin{proposition}
\label{prop:trilinear}
For every $s_0>1/2$, for every $s_2 \geq s_1 \geq s_0$, there exist $b' \in (0,1/2)$ and $C>0$ such that for every $T \in (0,1)$, $u, v \in X_T^{s_2,b'}$, 
\begin{align}
\label{eq:estimationtrilinear1}
    \norme{|u|^2 u}_{X_T^{s_2,-b'}} &\leq C  \norme{ u}_{X_T^{s_1,b'}}^2 \norme{ u}_{X_T^{s_2,b'}},\\
     \norme{|u|^2 u - |v|^2 v}_{X_T^{s_2,-b'}} &\leq C  \left(\norme{ u}_{X_T^{s_2,b'}}^2  +  \norme{ v}_{X_T^{s_2,b'}}^2 \right)\norme{ u-v}_{X_T^{s_2,b'}}.  \label{eq:estimationtrilinear2}
\end{align} 
\end{proposition}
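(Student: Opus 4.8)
The plan is to derive both inequalities from a single full-space, polarized trilinear estimate, and to prove that estimate by duality together with the dyadic $L^4$ bounds of Strichartz--Bourgain type, placing the top regularity on the highest frequency.

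First I would reduce \eqref{eq:estimationtrilinear2} to a symmetric trilinear bound. Writing $|u|^2u-|v|^2v$ as a telescoping sum of terms that are trilinear in $u,\bar u,v,\bar v$ and $u-v$, it suffices to establish
\[
\norme{f_1 \bar f_2 f_3}_{X_T^{s_2,-b'}} \leq C \sum \norme{f_{i}}_{X_T^{s_2,b'}}\norme{f_{j}}_{X_T^{s_1,b'}}\norme{f_{k}}_{X_T^{s_1,b'}},
\]
the sum running over which factor carries the top regularity $s_2$; since $s_1\leq s_2$, bounding each $X_T^{s_1,b'}$ norm by the corresponding $X_T^{s_2,b'}$ norm yields \eqref{eq:estimationtrilinear2}, and the estimate plainly contains \eqref{eq:estimationtrilinear1}. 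Next I would pass from the restriction spaces to the full spaces $X^{s,b}$. Because $b'\in(0,1/2)$, i.e. $|b'|<1/2$, a smooth time extension is bounded $X_T^{s,b'}\to X^{s,b'}$ uniformly in $T\in(0,1)$; transporting it through $u\mapsto u^{\sharp}$ (which commutes with time restriction) it acts only in time, hence a single extension $\tilde u$ of $u$ satisfies $\norme{\tilde u}_{X^{s_1,b'}}\leq C\norme{u}_{X_T^{s_1,b'}}$ and $\norme{\tilde u}_{X^{s_2,b'}}\leq C\norme{u}_{X_T^{s_2,b'}}$ simultaneously. Since $|\tilde u|^2\tilde u$ is an extension of $|u|^2u$, the restricted estimate follows from its full-space analogue, which is what remains to prove.

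For the full-space estimate I would argue by duality: $\norme{F}_{X^{s_2,-b'}}$ is the supremum of $\bigl|\iint_{\R\times\T^d} F\,\bar g\,dx\,dt\bigr|$ over $g$ with $\norme{g}_{X^{-s_2,b'}}\leq1$, so with $F=f_1\bar f_2 f_3$ I must bound a quadrilinear space-time integral. I would decompose the four functions into Littlewood--Paley pieces $P_N$. On the support of the integral the frequencies satisfy $k=k_1-k_2+k_3$, so the output frequency is at most the largest input frequency; by symmetry I order the inputs, assign the weight $\langle\cdot\rangle^{s_2}$ to the largest-frequency factor and $\langle\cdot\rangle^{s_1}$ to the other two. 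Applying Hölder with four $L^4_{t,x}$ factors and the dyadic bound $\norme{P_N w}_{L^4(\T^d\times\R)}\leq C\,N^{\alpha(d)}\norme{P_N w}_{X^{0,b'}}$ — with $\alpha(1)=0$, $\alpha(2)=\varepsilon$, $\alpha(3)=1/4+\varepsilon$ — reduces matters to summing a dyadic series over the four frequency scales.

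The main obstacle is precisely this summation for $d=2,3$, where the $L^4$ loss $N^{\alpha(d)}$ must be recovered. In the high$\times$low$\times$low regime the two low factors are summable exactly because $s_1\geq s_0>1/2$ (this is where the threshold $1/2$ enters, and in $d=3$ it is the scaling-critical Sobolev index of the cubic equation), while the top regularity on the highest frequency absorbs the output weight from $g\in X^{-s_2,b'}$. The delicate case is the high$\times$high interaction producing a high output, where a crude $L^4$ Hölder loses derivatives; there I would exploit the resonance identity
\[
(\tau_1+|k_1|^2)-(\tau_2+|k_2|^2)+(\tau_3+|k_3|^2)-(\tau+|k|^2)=|k_1|^2-|k_2|^2+|k_3|^2-|k|^2,
\]
so that a large resonance forces one modulation $\langle\tau_j+|k_j|^2\rangle$ to be large, producing a negative power that, together with $b'$ chosen close enough to $1/2$, compensates the Strichartz loss; the remaining near-resonant contributions are controlled by the refined (bilinear) lattice counting on $\T^d$. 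Taking $b'\in(0,1/2)$ close to $1/2$ keeps all the $L^4$ estimates valid while retaining $b'<1/2$, as required by the contraction argument elsewhere.
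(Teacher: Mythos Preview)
The paper does not actually prove this proposition: it declares the trilinear bounds to be folklore and simply cites \cite{Lau10b} for $d=1$, \cite{BGT05a} for $d=2$, and \cite{Lau10} for $d=3$. Your sketch is, in outline, precisely the argument those references carry out (duality, Littlewood--Paley localisation, dyadic Strichartz on $\T^d$, and modulation/resonance considerations), so you are supplying what the paper outsources rather than proposing a different route.

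One point in your case analysis deserves correction in $d=3$. You present high$\times$low$\times$low as the easy regime, closed by four $L^4$ factors once the two low-frequency inputs are summed using $s_1>1/2$. But with $\alpha(3)=\tfrac14+\varepsilon$ the four crude $L^4$ bounds leave, after the $\langle N_1\rangle^{s_2}$ on the highest input cancels the $\langle N_0\rangle^{-s_2}$ from the dual, an unsummable factor $N_{\max}^{1/2+}$; absorbing the low-frequency losses only returns $O(1)$ and does not touch this. Thus in $d=3$ the refined bilinear (lattice-counting) input is already needed in high$\times$low$\times$low, not only in high$\times$high. The tools you name are the correct ones, and the threshold $s_0>1/2$ is exactly what makes the bilinear route close; it is just the allocation of which interaction needs which tool that should be adjusted. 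In $d=1,2$ your description is accurate (and, as the paper notes, the estimate in fact holds well below $s_0=1/2$ there).
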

Proposition \ref{prop:trilinear} is part of the “folklore” for the study of nonlinear Schrödinger equation with periodic boundary conditions. This is a straightforward corollary of \cite[Lemma 0.3]{Lau10b} in $1$-d (one can even take $s_0 = 0$ and $b'=3/8$), \cite[Proposition 2.5 and Proposition 3.5]{BGT05a} in $2$-d (one can even take $s_0>0$) and \cite[Assumption 3, Lemma 1.1]{Lau10} in $3$-d.

\subsection{Energy estimates for smooth solutions}

The following result establish energy and multipliers identities for smooth solutions of \eqref{eq:NonlinearSchrodingerDamping_torus}.
\begin{proposition}
\label{prop:identity_estimates}
Let $T>0$, $s \geq 1$, $a \in C^{\infty}(\T^d;\R)$, $u_0 \in H^s$, $g \in L^2(0,T;H^s(\T^d))$. Assume that $u \in X_{T}^{s,b}$ is a solution of \eqref{eq:NonlinearSchrodingerDamping_torusg} for some $b \in (1/2,1)$, then
\begin{multline}\label{eq:L2_identity}
   \frac{1}{2} \int_{\T^d} |u(t',x)|^2 dx - \frac{1}{2} \int_{\T^d} |u(t, x)|^2 \\= -\int_t^{t'} \int_{\T^d} a(x) |u(s, x)|^2 dx ds + \int_t^{t'} \int_{\T^d} \Im(g(s,x)\overline{u}(s,x)) dx ds \qquad \forall 0 \leq t \leq t' \leq T,
\end{multline}
\begin{multline}\label{eq:energy_identity}
   \frac{1}{2} \int_{\T^d} |\nabla  u(t', x)|^2 dx +\frac{1}{4} \int_{\T^d}|u(t', x)|^{4} dx - \int_{\T^d} |\nabla u(t, x)|^2 dx - \int_{\T^d}\frac{1}{4} |u(t, x)|^{4} dx \\= - \int_t^{t'} \int_{\T^d} a(x)\Im( u(s,x) \partial_t \overline{u}(s,x)) dx ds - \int_t^{t'} \int_{\T^d} \Re(g(s,x)\partial_t \overline{u}(s,x))\qquad \forall 0 \leq t \leq t' \leq T,
\end{multline}
and for $P \in C^{\infty}(\T^d;\R)$,
\begin{multline}\label{eq:multiplier_estimate_Pg}
\int_t^{t'}\int_{\T^d} (\Im(u \partial_t \overline u)-|\nabla u|^2 -|u|^{4}) P(x) dx ds \\
= \frac 12 \int_t^{t'} \int_{\T^d} (\nabla P(x)\cdot \nabla)(|u|^2) dx ds + \int_t^{t'} \int_{\T^d}\Re(g(s,x)P\overline{u}(s,x)) dx ds\qquad \forall 0 \leq t \leq t' \leq T.
\end{multline}
\end{proposition}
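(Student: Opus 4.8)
The plan is to establish the three identities by first proving them for smooth (in particular $C^\infty$ in time and space) solutions via direct integration by parts, and then passing to the limit for general $X_T^{s,b}$ solutions by a density argument. The key observation is that all three identities are formal consequences of multiplying the equation \eqref{eq:NonlinearSchrodingerDamping_torusg} by a suitable test function (namely $\overline{u}$, $\partial_t \overline{u}$, and $P(x)\overline{u}$ respectively), integrating over $\T^d$, and taking the real or imaginary part. I would therefore carry out the computations assuming $u$ is smooth enough that every integration by parts and every time-differentiation under the integral sign is justified, then invoke the embedding $X_T^{s,b} \hookrightarrow C([0,T];H^s(\T^d))$ for $b>1/2$ together with a regularization/continuity argument to extend the identities to all solutions in the stated class.

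For \eqref{eq:L2_identity}, I would multiply the equation by $\overline{u}$, integrate over $\T^d$, and take the imaginary part. The term $-\Delta u$ contributes $\int |\nabla u|^2$ after integration by parts, which is real and drops out; the cubic term $|u|^2 u$ gives $\int |u|^4$, also real and hence killed by $\Im$; the damping $-ia(x)u$ multiplied by $\overline{u}$ gives $-ia(x)|u|^2$, whose imaginary part yields the $-\int a|u|^2$ term; finally $g\overline{u}$ produces the $\Im(g\overline{u})$ term. Integrating in time from $t$ to $t'$ gives \eqref{eq:L2_identity}. For \eqref{eq:energy_identity}, I would instead multiply by $\partial_t \overline{u}$ and take the real part: the Laplacian term reconstructs $\frac{d}{dt}\frac12\int|\nabla u|^2$, the cubic term reconstructs $\frac{d}{dt}\frac14\int|u|^4$ (this is exactly the defocusing energy whose conservation is \eqref{eq:conservEnergyH1}), while $i\partial_t u \cdot \partial_t\overline u = i|\partial_t u|^2$ is purely imaginary and vanishes under $\Re$; the damping and source terms give the two right-hand side contributions.

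For the Morawetz-type identity \eqref{eq:multiplier_estimate_Pg}, I would multiply the equation by $P(x)\overline{u}$ and take the real part (or combine real and imaginary parts as dictated by the desired expression). The delicate term is $\Re(-\Delta u \cdot P\overline{u})$; integrating by parts moves one derivative and produces $-\int P|\nabla u|^2 - \int P\,\Re(\overline{u}\,\Delta u)$-type contributions, and one must carefully reorganize $\Re(\nabla u \cdot \nabla P \,\overline u)$ into the half-derivative-of-modulus term $\frac12\int (\nabla P\cdot\nabla)|u|^2$ using $\Re(\overline u \nabla u) = \frac12\nabla|u|^2$. The cubic term contributes $-\int P|u|^4$ and the time-derivative term gives $\int \Im(u\partial_t\overline u)P$, assembling the left-hand side. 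I expect this integration-by-parts bookkeeping in \eqref{eq:multiplier_estimate_Pg} to be the main obstacle, since the correct grouping of gradient terms into the $(\nabla P\cdot\nabla)|u|^2$ form requires care with which part (real versus imaginary) each term survives into.

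The remaining—and genuinely nontrivial—point is the justification for low-regularity solutions: a priori $u \in X_T^{s,b}$ need not have $\partial_t u$ in $L^2$ in space-time, so expressions like $\int a\,\Im(u\partial_t\overline u)$ in \eqref{eq:energy_identity} are not obviously well-defined pointwise in the required integrated sense. I would address this by first noting that the identities are stable under the $X_T^{s,b}$ norm: one approximates $u_0$, $g$, and hence $u$ by smooth data, establishes the identities for the smooth approximations where all terms are classical, and passes to the limit using the trilinear estimates of Proposition \ref{prop:trilinear} to control $|u|^2u$ and the continuity of the multiplication operators from Lemma \ref{lem:mutiplication}. Since $u\in X_T^{s,b}\hookrightarrow C([0,T];H^s)$ with $s\geq 1$, the boundary-in-time quantities ($L^2$, $H^1$, and $L^4$ norms, the latter via $H^1\hookrightarrow L^4$ for $d\le 3$) converge, and the time-integrated right-hand sides converge because the equation lets us rewrite $\partial_t u$ in terms of $\Delta u$, $|u|^2u$, $au$ and $g$, all of which are controlled in the appropriate dual Bourgain norms; this substitution is in fact the cleanest way to give meaning to the $\partial_t\overline u$ terms rigorously.
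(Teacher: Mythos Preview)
Your proposal is correct and follows essentially the same route as the paper: multiply \eqref{eq:NonlinearSchrodingerDamping_torusg} by $\overline u$, $\partial_t\overline u$, and $P\overline u$, integrate over $\T^d$, take the imaginary or real part as appropriate, then integrate in time, with the general $X_T^{s,b}$ case handled by a regularization argument. In fact you spell out the approximation step (rewriting $\partial_t u$ via the equation to make sense of the $\partial_t\overline u$ terms) more carefully than the paper does, which simply asserts that ``the general case comes from a regularization argument.''
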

Note that the equation \eqref{eq:multiplier_estimate_Pg} is inspired from Morawetz multipliers strategy.
\begin{proof}
In the proof, we assume that the equation \eqref{eq:NonlinearSchrodingerDamping_torusg} is satisfied in the strong sense, that is the case for instance if $u \in X_{T}^{2,b} \subset C([0,T];H^2(\T^d))$. The general case comes from a regularization argument.

For \eqref{eq:L2_identity}, we multiply \eqref{eq:NonlinearSchrodingerDamping_torusg} by $\overline{u}$, integrating in $\T^d$, we get by integration by parts, 
$$   \int_{\T^d} i \partial_t u(s, x) \overline{u(s,x)}  dx - \int_{\T^d} |\nabla u(s, x)|^2  = \int_{\T^d} |u(s, x)|^4  dx- i \int_{\T^d} a(x) |u(s, x)|^2 dx +  \int_{\T^d} g(s,x)\overline{u(s,x)} dx.$$ 
Then, by taking the imaginary part, using $2 \Im (i \partial_t u \overline{u}) = 2\Re(\partial_t u \overline{u}) = \partial_t |u|^2$ to get
$$ \frac{1}{2} \frac{d}{ds}  \int_{\T^d} |u(s,x)|^2 dx  = - \int_{\T^d} a(x) |u(s, x)|^2 dx+  \int_{\T^d} \Im(g(s,x)\overline{u}(s,x)) dx.$$ 
We then integrate for $s \in (t',t)$ to get the result.

For \eqref{eq:energy_identity}, we multiply \eqref{eq:NonlinearSchrodingerDamping_torusg} by $\partial_t \overline{u}$, integrating in $\T^d$, we get by integration by parts
\begin{multline*}
    \int_{\T^d} i |\partial_t u(s, x)|^2  dx - \int_{\T^d} \nabla u(s, x) \cdot \partial_t \nabla \overline{u}(s,x)  \\
    = \int_{\T^d} |u(s, x)|^2 u(s,x) \partial_t \overline{u}(s,x)  dx- i \int_{\T^d} a(x) u(s, x) \partial_t \overline{u}(s,x) dx +  \int_{\T^d} g(s,x)\partial_t \overline{u}(s,x) dx.
\end{multline*}  
Then, by taking the real part, using $2\Re(\nabla u \partial_t \nabla \overline{u}) = \partial_t |\nabla u|^2$, $ 4\Re(|u|^2 u \partial_t \overline{u}) = \partial_t |u|^4$ and $\Re(iz) = -\Im(z)$, we get
\begin{multline*}
     \frac{1}{2} \frac{d}{ds}  \int_{\T^d} |\nabla u(s,x)|^2 dx + \frac{1}{4} \frac{d}{ds}  \int_{\T^d} |u(s,x)|^4 dx\\
    =  -  \int_{\T^d} a(x)\Im( u(s,x) \partial_t \overline{u}(s,x)) dx ds -  \int_{\T^d} \Re(g(s,x)\partial_t \overline{u}(s,x))
\end{multline*} 
We then integrate for $s \in (t',t)$ to get the result.

Let us multiply \eqref{eq:NonlinearSchrodingerDamping_torusg} by $P \overline u$ and integrate on $(t',t) \times \T^d$. By taking the real part and since 
\begin{multline*}
    \Ree{\int_{t'}^t\int_{\T^d} \Delta u P\overline u dxds}=-\int_{t'}^t \int_{\T^d} \Ree{(\nabla u\cdot \nabla P(x) \overline u)} + |\nabla u|^2 P(x) dx ds \\
    = -\int_{t'}^t \int_{\T^d} \frac 12 (\nabla P(x)\cdot \nabla)(|u|^2) + |\nabla u|^2 P(x) dx ds,
\end{multline*}
it follows that $u$ satisfies \eqref{eq:multiplier_estimate_Pg}.
\end{proof}

The next result establishes energy estimates for $E$.
\begin{proposition}
\label{prop:energy_estimates}
Let $T>0$, $s \geq 1$, $a \in C^{\infty}(\T^d;\R)$, $u_0 \in H^s$, $g \in L^2(0,T;H^s(\T^d))$. There there exists $C_{T,\T^d,a}$ such that for $u \in X_{T}^{s,b}$ a solution of \eqref{eq:NonlinearSchrodingerDamping_torusg} for some $b \in (1/2,1)$, then we have
\begin{equation}
    \label{eq:Gronwallg}
    E(t) \leq C \left(E(0)+ \norme{g}_{L^2(0,T;H^1(\T^d))}^2  + \norme{g}_{L^2(0,T;H^1(\T^d))}^4  \right) \qquad \forall t \in [0,T].
\end{equation}
\end{proposition}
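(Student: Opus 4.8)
The plan is to establish the energy estimate \eqref{eq:Gronwallg} by combining the two identities \eqref{eq:L2_identity} and \eqref{eq:energy_identity} from Proposition~\ref{prop:identity_estimates} to control the time evolution of the total energy $E(t)$, and then to close a Gr\"onwall-type argument. Recall that $E(t)$ is the sum of the $L^2$-energy and the nonlinear energy. The idea is that the damping term $-ia(x)u$ produces \emph{dissipative} (sign-definite, negative) contributions to the energy balance, so that the only terms that can make the energy grow are those involving the source $g$. First I would add \eqref{eq:L2_identity} and \eqref{eq:energy_identity} (with $t=0$, $t'=t$) to obtain an identity for $E(t) - E(0)$ whose right-hand side consists of the damping contributions $-\int_0^t\int_{\T^d} a|u|^2\,dx\,ds$ and $-\int_0^t\int_{\T^d} a\,\Im(u\,\partial_t\overline{u})\,dx\,ds$, together with the source contributions $\int_0^t\int_{\T^d}\Im(g\overline{u})\,dx\,ds$ and $-\int_0^t\int_{\T^d}\Re(g\,\partial_t\overline{u})\,dx\,ds$.

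\textbf{Handling the problematic terms.} The main obstacle is that two terms in the balance involve $\partial_t\overline{u}$, which is not directly controlled by the energy $E$: namely the damping term $-\int a\,\Im(u\,\partial_t\overline{u})$ and the source term $-\int\Re(g\,\partial_t\overline{u})$. To remove $\partial_t u$, I would substitute the equation \eqref{eq:NonlinearSchrodingerDamping_torusg} itself, writing $i\partial_t u = -\Delta u + |u|^2 u - ia(x)u + g$, so that $\partial_t\overline{u}$ is expressed in terms of spatial derivatives, the cubic term, the damping, and $g$. After this substitution, the dangerous terms become combinations of quantities such as $\int a\,u\,\overline{\Delta u}$, $\int a|u|^4$, $\int a^2|u|^2$, $\int g\,\overline{\Delta u}$, etc. The terms $\int a\,u\,\overline{\Delta u}$ can be integrated by parts to land on $\int a|\nabla u|^2$ plus lower-order terms controlled by the energy (using that $a\in C^\infty$ is bounded with bounded derivatives). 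The genuinely delicate source term is $\int g\,\overline{\Delta u}$, which after integration by parts becomes $-\int\nabla g\cdot\nabla\overline{u}$; this is estimated by $\norme{g}_{H^1}\norme{\nabla u}_{L^2}\lesssim \norme{g}_{H^1}E(t)^{1/2}$, which is why the $H^1$-norm of $g$ appears in \eqref{eq:Gronwallg}.

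\textbf{Closing the estimate.} After these manipulations, every term on the right-hand side is bounded either by $C\int_0^t E(s)\,ds$ (the dissipative and lower-order terms, plus the damping contributions, which are in fact of favorable sign but in any case controlled by the energy) or by $C\int_0^t \norme{g(s)}_{H^1}\big(E(s)^{1/2}+E(s)\big)\,ds$ coming from the source. Using the elementary inequality $\norme{g}_{H^1}E^{1/2}\leq \tfrac12\norme{g}_{H^1}^2 + \tfrac12 E$ and $\norme{g}_{H^1}E \leq \tfrac12\norme{g}_{H^1}^2(1+E^2)$ (or simply absorbing lower powers into higher ones via Young's inequality), one obtains a differential inequality of the form
\begin{equation*}
E(t) \leq C\Big(E(0) + \norme{g}_{L^2(0,T;H^1)}^2\Big) + C\int_0^t \big(1+\norme{g(s)}_{H^1}^2\big)\,E(s)\,ds.
\end{equation*}
Applying Gr\"onwall's lemma on $[0,T]$, the integrating factor is $\exp\!\big(C\int_0^T(1+\norme{g}_{H^1}^2)\big) = C_{T,a}\exp(C\norme{g}_{L^2(0,T;H^1)}^2)$. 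The quartic term $\norme{g}_{L^2(0,T;H^1)}^4$ in \eqref{eq:Gronwallg} then arises by bounding the exponential of the quadratic source norm and its product with $\norme{g}_{L^2(0,T;H^1)}^2$, after which I would absorb the $T$-dependence into the constant $C=C_{T,\T^d,a}$. I expect the bookkeeping of the nonlinear (cubic/quartic) terms under the substitution of $\partial_t u$ to be the most error-prone part, since the defocusing sign and the Gagliardo--Nirenberg/Sobolev embedding $H^1(\T^d)\hookrightarrow L^4(\T^d)$ must be used to keep the quartic contributions controlled by powers of $E$.
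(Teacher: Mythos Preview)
Your overall strategy matches the paper's: combine the $L^2$ and nonlinear energy identities, replace $\partial_t\overline u$ via the equation, integrate by parts, and close with a Gr\"onwall-type argument. The paper packages the treatment of the damping term $\int a\,\Im(u\,\partial_t\overline u)$ using the Morawetz-type multiplier identity \eqref{eq:multiplier_estimate_Pg} with $P=a$, but that identity is precisely what one obtains by your direct substitution of $\partial_t u$, so there is no real difference there.

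There is, however, a genuine gap in your closing step. The source term $\int \Re(g\,\overline{|u|^2 u})$ is bounded, via H\"older and the $L^4$-part of the energy, by $\|g\|_{H^1}\|u\|_{L^4}^3 \le C\|g\|_{H^1}E^{3/4}$, \emph{not} by $\|g\|_{H^1}E$. This sublinear power is essential: Young's inequality then gives
\[
\|g(s)\|_{H^1}\,E(s)^{3/4} \;\le\; \varepsilon\,E(s) + C_\varepsilon\,\|g(s)\|_{H^1}^4,
\]
so that the $\|g\|_{L^2(0,T;H^1)}^4$ term appears \emph{directly} as a forcing term (after Cauchy--Schwarz in time), while the $\varepsilon E$ contribution is absorbed by a standard linear Gr\"onwall with a constant depending only on $T$ and $a$. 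By contrast, your route produces the integrating factor $\exp\!\big(C\|g\|_{L^2(0,T;H^1)}^2\big)$, and the claim that this exponential can be ``bounded'' to yield the polynomial $\|g\|^4$ is false: no polynomial in $\|g\|$ controls $e^{C\|g\|^2}$, and the constant in \eqref{eq:Gronwallg} must be independent of $g$. Fixing the power from $E$ to $E^{3/4}$ (which is exactly what the $L^4$ control built into $E$ buys you) repairs the argument and recovers the paper's ``nonlinear Gr\"onwall'' step.
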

\begin{proof}
First, from \eqref{eq:L2_identity}, we deduce from a Gronwall's estimate that
\begin{equation}
\label{eq:gronwallL2}
    \norme{u(t,\cdot)}_{L^2(\T^d)}^2 \leq C \left(\norme{u(0,\cdot)}_{L^2(\T^d)}^2 + \norme{g}_{L^2(0,T;L^2(\T^d))}^2 \right)\qquad\forall t \in [0,T].
\end{equation}
We then use \eqref{eq:multiplier_estimate_Pg} with $P=a$ to estimate the first term in the right hand side of \eqref{eq:energy_identity}, we then have for every $t \in [0,T]$,
\begin{multline}
\label{eq:firstestimatenergy}
    \int_0^{t} \int_{\T^d} a(x)\Ima( u(s,x) \partial_t \overline{u}(s,x)) dx ds \\
    \leq C \int_0^t \int_{\T^d} |\nabla u|^2 dx ds + C \int_0^t \int_{\T^d} |u|^4 dx ds +  C \int_0^t \int_{\T^d} |u|^2 dx ds + C \int_0^t \int_{\T^d} |g|^2 dx ds.
\end{multline}
We then estimate the second term in the right hand side of \eqref{eq:energy_identity} by using the equation \eqref{eq:NonlinearSchrodingerDamping_torusg}, integration by parts, Hölder's estimate and the Sobolev embedding $H^1(\T^d) \hookrightarrow L^4(\T^d)$ and Young's inequality, we then have for every $t \in [0,T]$,
\begin{multline}
\label{eq:secondestimatenergy}
    \int_0^{t} \int_{\T^d} \Re(g\partial_t \overline{u}) = \int_0^{t} \int_{\T^d} \Re(g \overline{i \Delta u - i |u|^2 u + ia(x) u - ig}) \\
    \leq C \Big(\int_0^t \norme{g(s,\cdot)}_{H^1(\T^d)} \norme{u(s,\cdot)}_{H^1(\T^d)}ds +  \int_0^t \norme{g(s,\cdot)}_{L^2(\T^d)} \norme{u(s,\cdot)}_{L^2(\T^d)}ds\\ + \int_0^t \norme{g(s,\cdot)}_{H^1(\T^d)} \norme{u(s,\cdot)}_{L^4(\T^d)}^3 ds + \norme{g}_{L^2(0,T;L^2(\T^d))}^2 \Big)\\
    \leq C \Big(\int_0^t \norme{g(s,\cdot)}_{H^1(\T^d)} (E(s)^{1/2}+E(s)^{3/4})+ \norme{g}_{L^2(0,T;L^2(\T^d))}^2 \Big).
\end{multline}
We plug \eqref{eq:gronwallL2}, \eqref{eq:firstestimatenergy}, \eqref{eq:secondestimatenergy} together with \eqref{eq:energy_identity} to obtain 
\begin{multline*}
    E(t) \leq C \Big(E(0) + \norme{g}_{L^2(0,T;L^2(\T^d))}^2 \\+  \int_{0}^t E(s) ds  +\int_0^t \norme{g(s,\cdot)}_{H^1(\T^d)} (E(s)^{1/2}+E(s)^{3/4}) ds
    \Big) \qquad \forall t \in [0,T].
\end{multline*}
Nonlinear Gronwall's estimate leads to \eqref{eq:Gronwallg}.
\end{proof}

\subsection{Well-posedness results for the nonlinear Schrödinger equation in $H^s(\T^d)$}
Now we can state the well-posedness result for Cauchy problems associated to \eqref{eq:NonlinearSchrodingerDamping_torus}.
\begin{proposition}
\label{prop:wellposedbourgain}
Let $T>0$, $s \geq 1$, $a \in C^{\infty}(\T^d;\R)$. Then there exists $b \in (1/2,1)$ such that for every $u_0 \in H^s$, $g \in L^2(0,T;H^s(\T^d))$, there exists a unique solution $u \in X_{T}^{s,b}$ to \eqref{eq:NonlinearSchrodingerDamping_torusg}.

Moreover, the flow map
\begin{equation}\label{eq:flowmap}
    F : 
    \left|
    \begin{array}{lll}
    H^s(\T^d) \times L^2(0,T;H^s(\T^d)) & \to & X_{T}^{s,b}\\
    (u_0,g) & \mapsto & u,
    \end{array}
    \right.
\end{equation}
is Lipschitz on every bounded subset.
\end{proposition}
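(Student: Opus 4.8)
The plan is to establish local well-posedness in $X_T^{s,b}$ by a fixed-point argument and then upgrade to a global solution using the energy estimate of Proposition~\ref{prop:energy_estimates}. First I would reformulate \eqref{eq:NonlinearSchrodingerDamping_torusg} in Duhamel form as
\begin{equation*}
  u(t) = e^{it\Delta} u_0 - i \int_0^t e^{i(t-\tau)\Delta} \bigl( |u|^2 u - i a(x) u + g \bigr)(\tau)\, d\tau,
\end{equation*}
and localize in time by inserting cutoffs $\varphi(t)$ and $\varphi(t/T)$. I would define the map $\Lambda u$ to be the right-hand side above (with cutoffs) and seek a fixed point in a ball of $X_T^{s,b}$ for $b \in (1/2,1)$ close to $1/2$, chosen so that the companion exponent $b'$ from Proposition~\ref{prop:trilinear} satisfies $b + b' \le 1$ and $b' < 1/2 < b$, matching the hypotheses of Lemma~\ref{lem:OneDEstimateSobolev}.

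The core of the contraction estimate relies on the three tools already stated. The free evolution term $e^{it\Delta} u_0$ is controlled in $X_T^{s,b}$ by $\norme{u_0}_{H^s}$. The Duhamel integral is estimated using Lemma~\ref{lem:OneDEstimateSobolev} (in its vector-valued $H^s(\T^d)$ form, applied frequency-by-frequency after the $u^\sharp$ conjugation), which converts the time integration into a gain of a power $T^{1-b-b'}$ while passing from the $X_T^{s,-b'}$ norm of the forcing to the $X_T^{s,b}$ norm of the output. The cubic term $|u|^2 u$ is then absorbed by the trilinear estimate \eqref{eq:estimationtrilinear1}, the linear damping $a(x)u$ is handled by Lemma~\ref{lem:mutiplication} (spatial multiplication, which costs at most $|b|$ derivatives but is harmless since $a$ is smooth and we work at fixed regularity with room to spare), and the source $g \in L^2(0,T;H^s)$ embeds into $X_T^{s,-b'} \supset X_T^{s,0}$ trivially. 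Combining these, I get
\begin{equation*}
  \norme{\Lambda u}_{X_T^{s,b}} \le C \norme{u_0}_{H^s} + C T^{1-b-b'} \bigl( \norme{u}_{X_T^{s,b}}^3 + \norme{u}_{X_T^{s,b}} \bigr) + C \norme{g}_{L^2(0,T;H^s)},
\end{equation*}
with an analogous difference estimate from \eqref{eq:estimationtrilinear2}; choosing $T$ small depending on $\norme{u_0}_{H^s} + \norme{g}_{L^2}$ makes $\Lambda$ a contraction on a suitable ball, yielding existence, uniqueness, and the Lipschitz dependence of the flow map on bounded sets by the standard subtraction argument. The embedding $X_T^{s,b} \hookrightarrow C([0,T];H^s)$ for $b > 1/2$ gives the stated continuity in time.

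The main obstacle is globalization, since the local existence time $T$ produced above depends on the norm of the data and could in principle shrink to zero upon iteration. Here the defocusing structure is essential: the a priori energy bound \eqref{eq:Gronwallg} of Proposition~\ref{prop:energy_estimates} controls $E(t)$, hence $\norme{u(t)}_{H^1}$, uniformly on $[0,T]$ in terms of $E(0)$ and $\norme{g}_{L^2(0,T;H^1)}$, so the $H^1$-norm cannot blow up in finite time. For $s = 1$ this immediately permits iterating the local construction with a time step bounded below, reassembling the pieces via the gluing property of restricted Bourgain spaces stated among the bullet points. For $s > 1$ I would propagate the higher regularity by a separate $X_T^{s,b}$ estimate that is linear in the top-order norm once the $H^1$-norm is known to stay bounded (a Gronwall argument in the $X_T^{s,b}$ norm controlled by the already-globalized $X_T^{1,b}$ quantity), which prevents higher Sobolev norms from escaping on any finite interval. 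Care must be taken that the constants in the trilinear estimate are uniform over the subintervals, which is why \eqref{eq:estimationtrilinear1} is stated with the splitting $s_2 \ge s_1 \ge s_0$ allowing the lower-regularity factors to carry the bounded $H^1$ information.
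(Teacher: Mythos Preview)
Your proposal is correct and follows essentially the same approach as the paper: local existence by contraction in $X_T^{s,b}$ via the Duhamel formula together with Lemma~\ref{lem:OneDEstimateSobolev}, the trilinear estimates of Proposition~\ref{prop:trilinear}, and Lemma~\ref{lem:mutiplication}; globalization at $s=1$ via the a priori energy bound \eqref{eq:Gronwallg}; and propagation of higher regularity using that \eqref{eq:estimationtrilinear1} is linear in the top-order norm once the $X_T^{1,b}$ quantity is controlled. The only cosmetic difference is that the paper packages the propagation-of-regularity step as a short contradiction argument (showing the maximal $H^s$-existence time must coincide with the maximal $H^1$-existence time) rather than as an iterated Gronwall-type bound, but the substance is identical.
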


\begin{proof}
In Proposition \ref{prop:trilinear}, we fix $s_2 = s$ and $s_1 = 0$. This gives us the existence of $b' \in (0,1/2)$ such that \eqref{eq:estimationtrilinear1} and \eqref{eq:estimationtrilinear2} hold. 

First, we notice that if $g \in L^2(0,T;H^s(\T^d))$ then $g \in X_{T}^{s,-b'}$.

Now, let us fix $b$ such that $b>1/2$ and $b+b' \leq 1$.

\textit{Step 1: Local existence in $X_{T}^{s,b}$.}
We consider the functional
\begin{equation}
    \Phi(u)(t) = e^{it \Delta} u_0 - i \int_0^t e^{i(t-\tau)\Delta} [|u|^2 u - i a(x) u + g](\tau) d \tau.
\end{equation}
Thanks to a Banach fixed-point argument, we will prove that $\Phi$ admits a unique fixed point in $X_{T}^{s,b}$ provided that $T>0$ is small enough.

Let $\psi \in C_c^{\infty}(\R)$ be such that $\psi = 1$ on $[-1,+1]$. Then we have
\begin{equation}
    \norme{\psi(t) e^{i t \Delta} u_0}_{X^{s,b}} = \norme{\psi}_{H^b(\R)} \norme{u_0}_{H^s(\T^d)}.
\end{equation}
Therefore, for $T \leq 1$, we have
\begin{equation}
    \norme{e^{i t \Delta} u_0}_{X_T^{s,b}} \leq C \norme{u_0}_{H^s(\T^d)}.
\end{equation}
The estimate \eqref{eq:estimationelementaryoneD} from Lemma \ref{lem:OneDEstimateSobolev} then implies that
\begin{equation}
    \norme{\psi(t/T) \int_0^t e^{i(t-\tau) \Delta} F(\tau) d \tau}_{X_T^{s,b}} \leq C T^{1-b-b'} \norme{F}_{X_T^{s,-b'}}.
\end{equation}
Then, by using the trilinear estimate \eqref{eq:estimationtrilinear1} from Lemma \ref{prop:trilinear} the multiplication estimate \eqref{eq:multiplicationspatial} from Lemma \ref{lem:mutiplication} and Bourgains spaces embeddings, we get
\begin{align}
    &\norme{\int_0^t e^{i(t-\tau)\Delta} [|u|^2 u - i a(x) u + g](\tau) d \tau}_{X_T^{s,b}}\notag\\
    &\leq C T^{1-b-b'} \norme{|u|^2 u - i a(x) u + g}_{X_T^{s,-b'}}\notag\\
    & \leq C T^{1-b-b'} \left(\norme{|u|^2 u}_{X^{s,-b'}} + \norme{a(x) u}_{X_T^{s,-b'}} + \norme{ g}_{X_T^{s,-b'}}\right)\notag\\
    & \leq C T^{1-b-b'} \left(\norme{|u|^2 u}_{X^{s,-b'}} + \norme{u}_{X_T^{s,b}} + \norme{ g}_{X_T^{s,-b'}}\right)\notag\\
    & \leq C  T^{1-b-b'} \norme{u}_{X_{T}^{s,b}} \left(1 + \norme{u}_{X_{T}^{1,b}}^2\right) + C  T^{1-b-b'} \norme{g}_{X_T^{s,-b'}}.
\end{align}
Then, 
\begin{equation}
\label{eq:balltoball}
    \norme{\Phi(u)}_{X_T^{s,b}} \leq  C \norme{u_0}_{H^s(\T^d)} + C\norme{g}_{X_T^{s,-b'}} + C  T^{1-b-b'} \norme{u}_{X_{T}^{s,b}} \left(1 + \norme{u}_{X_{T}^{1,b}}^2\right).
\end{equation}
Similarly, by using this time the trilinear estimate \eqref{eq:estimationtrilinear2} from Lemma \ref{prop:trilinear} one can prove that 
\begin{equation}
\label{eq:contractiveproperty}
    \norme{\Phi(u)-\Phi(v)}_{X_T^{s,b}} \leq  C  T^{1-b-b'} \norme{u-v}_{X_{T}^{s,b}} \left(1 + \norme{u}_{X_{T}^{s,b}}^2 + \norme{v}_{X_{T}^{s,b}}^2\right).
\end{equation}
By taking $T$ small enough, we then deduce from \eqref{eq:balltoball} and \eqref{eq:contractiveproperty} that $\Phi$ is a contractive mapping from a suitable ball of $X_{T}^{s,b}$ to itself, then admits a unique fixed-point. Moreover, we have uniqueness in the class $X_{T}^{s,b}$ for the Duhamel equation, thanks to \eqref{eq:contractiveproperty} therefore to the Schrödinger equation.\\

\textit{Step 2: Propagation of regularity.}
Let us take here $s>1$. From Step 1, by using that $u_0 \in H^s(\T^d) \hookrightarrow H^1(\T^d)$ and $g \in L^2(0,T;H^s(\T^d)) \hookrightarrow L^2(0,T;H^1(\T^d))$, one can construct a maximal solution $u_1 \in X_{T_1}^{1,b}$ and a maximal solution $u_2 \in X_{T_2}^{s,b}$ for some $T_1, T_2 >0$. We clearly have $T_2 \leq T_1$. Moreover, by uniqueness in $X_{T_1}^{1,b}$, we also have $u_1 = u_2$ in $[0, T_2)$. Assume that $T_2 < T_1$, then there exists $C>0$, $\delta >0$ such that
\begin{equation}
\label{eq:explosionandbounded}
    \lim_{t \to T_2} \norme{u_2}_{X_{t}^{s,b}} = +\infty\ \text{and}\ \norme{u_2}_{X_{t}^{1,b}} \leq C\qquad \forall t \in [T_2-\delta, T_2).
\end{equation}
By using the local existence in $H^1(\T^d)$ i.e. Step 1 (for $s=1$) and gluing of solutions, we then get that there exists $C>0$ such that 
$$\norme{u_2}_{X_{T_2}}^{1,b} \leq C.$$
Then by using \eqref{eq:balltoball} on $[T_2-\varepsilon, T_2]$ for $\varepsilon >0$ small enough such that $C \varepsilon^{1-b-b'} (1+\norme{u}_{X_{T_2}^{1,b}}) < 1/2$, we obtain
\begin{equation}
    \norme{u_2}_{X_{[T_2-\varepsilon, T_2]}^{s,b}} \leq C \norme{u_2(T_2 - \varepsilon, \cdot)}_{H^s(\T^d)} + C \norme{g}_{X_{T_2}^{s,-b'}}.
\end{equation}
Therefore, by using gluing of solutions, we obtain that $u_2 \in X_{T_2}^{s,b}$ contradicting \eqref{eq:explosionandbounded}.\\

\textit{Step 3: Energy estimates for global existence.} The local existence of Step 1 with $s=1$ and the a priori energy estimate \eqref{eq:Gronwallg} implies therefore global existence in $X_{T}^{1,b}$. Step 2 i.e. propagation of regularity then implies the global existence in $X_{T}^{s,b}$ for $s \geq 1$.

The local Lipschitz estimate on the flow is a straightforward consequence of \eqref{eq:contractiveproperty}.
\end{proof}

\section{Carleman estimate on the nonlinear equation}
\label{sec:carlemanpart}

The goal of this part is to obtain a Carleman estimate for the nonlinear Schrödinger equation \eqref{eq:NonlinearSchrodingerDamping_torusg} and to deduce from it an observability inequality. In order to do this, we closely follow the approach of \cite{MOR08} for establishing Carleman estimates for linear Schrödinger equation. We want to highlight the fact that the main difference is the presence of the cubic defocusing nonlinearity $-|u|^2 u$, that we include in our operator. Note that such a strategy has been proposed for instance in the context of dissipative nonlinear parabolic equations in \cite{BGO09} but up to our knowledge, this strategy seems to be new in the context of nonlinear Schrödinger equation.

\subsection{Definition of Carleman weights and main properties}

Let us define $\omega_1 \subset \subset \omega_0 \subset \subset \omega$ such that
by denoting
\begin{equation}
    \label{eq:defIvarepsilon0}
    I_{\varepsilon_0} = (0, \varepsilon_0) \cup (2 \pi - \varepsilon_0, 2 \pi) + 2 \pi \Z \subset \T\qquad \varepsilon_0 \in (0, \varepsilon),
\end{equation}
we have
\begin{equation}
    \label{eq:omegapropertiesBis}
    	\left\{
		    \begin{array}{ll}
     \omega_{0} := I_{\varepsilon_0} \subset \omega & \text{when}\ d= 1,\\
      \omega_{0} := \left(I_{\varepsilon_0} \times \T\right) \cup \left(\T \times I_{\varepsilon_0} \right) \subset \omega & \text{when}\ d= 2,\\
      \omega_{0} := \left(I_{\varepsilon_0} \times \T^2\right) \cup \left(\T \times I_{\varepsilon_0} \times \T \right) \cup \left( \T^2 \times I_{\varepsilon_0} \right) \subset \omega & \text{when}\ d= 3,
     \end{array}
		\right.
\end{equation}

First, we have the following easy lemma.
\begin{lemma}
\label{lem:defeta}
There exists $\eta \in C^{\infty}(\T^d;\R^+)$ such that for some $c>0$
\begin{equation}
\label{eq:propertiesetanabla}
   |\nabla \eta(x)| \geq c > 0\qquad \forall x \in \T^d \setminus \overline{\omega_0},
\end{equation}
\begin{equation}
    \label{eq:strongpseudoconvexity}
    D^2\eta(x)(\xi, \xi) + |\nabla \eta(x) \cdot \xi|^2 \geq c |\xi|^2\qquad \forall (x,\xi) \in (\T^d \setminus \overline{\omega_0})\times \R^d.
\end{equation}
\end{lemma}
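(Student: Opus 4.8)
The plan is to reduce the strong pseudoconvexity condition \eqref{eq:strongpseudoconvexity} to the uniform convexity of an auxiliary function. Setting $\Phi = e^{\eta}$, the elementary identity $D^2(e^\eta) = e^\eta\,(D^2\eta + \nabla\eta\otimes\nabla\eta)$ gives, for every $(x,\xi)$,
\[
D^2(e^\eta)(x)(\xi,\xi) = e^{\eta(x)}\left(D^2\eta(x)(\xi,\xi) + |\nabla\eta(x)\cdot\xi|^2\right).
\]
Hence \eqref{eq:strongpseudoconvexity} is equivalent to the uniform convexity $D^2\Phi(x)(\xi,\xi) \geq c'\,|\xi|^2$ on $(\T^d\setminus\overline{\omega_0})\times\R^d$, while \eqref{eq:propertiesetanabla} is equivalent, through $\nabla\eta = \nabla\Phi/\Phi$, to the non-degeneracy $|\nabla\Phi| \geq c'' > 0$ on $\T^d\setminus\overline{\omega_0}$. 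I would therefore construct a positive function $\Phi\in C^\infty(\T^d;\R)$ that is uniformly convex with non-vanishing gradient outside $\overline{\omega_0}$, and then set $\eta = \log\Phi$; normalizing $\Phi \geq 1$ guarantees $\eta\geq 0$, and the bounds on $\Phi$ transfer to \eqref{eq:propertiesetanabla}--\eqref{eq:strongpseudoconvexity} since $\Phi$ is bounded above and below by positive constants.

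First I would build a one-dimensional block $f\in C^\infty(\T;\R)$, with $f>0$, satisfying $f''\geq c'$ and $|f'|\geq c''$ on a neighborhood of the closed arc $\T\setminus I_{\varepsilon_0}$ (that is, away from the point $0$). On that arc one simply prescribes a strictly convex, strictly increasing profile (e.g. a multiple of an exponential), and then closes it up smoothly and periodically across $I_{\varepsilon_0}$, where no condition is imposed. This is where the only genuine difficulty lies: a globally convex function on the circle cannot exist, so $f'$ must turn around and $f''$ must become negative somewhere --- but by the very definition \eqref{eq:omegapropertiesBis} of $\omega_0$ we are free to confine this ``repair'' entirely to the uncontrolled hole $I_{\varepsilon_0}$.

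Then I would take the separable sum $\Phi(x) = \sum_{j=1}^d f(x_j)$, so that its Hessian is the diagonal matrix $D^2\Phi(x) = \mathrm{diag}(f''(x_1),\dots,f''(x_d))$ and $\nabla\Phi(x) = (f'(x_1),\dots,f'(x_d))$. By \eqref{eq:omegapropertiesBis}, a point $x\notin\overline{\omega_0}$ has all its coordinates in the open arc where the bounds on $f$ hold; consequently every diagonal entry of $D^2\Phi(x)$ is $\geq c'$, giving $D^2\Phi(x)(\xi,\xi)\geq c'|\xi|^2$, and $|\nabla\Phi(x)|^2 = \sum_j |f'(x_j)|^2 \geq d\,(c'')^2$. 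After adding a constant to $f$ to ensure $\Phi\geq 1$, the function $\eta = \log\Phi$ is smooth, non-negative, and satisfies \eqref{eq:propertiesetanabla} and \eqref{eq:strongpseudoconvexity} with a constant $c>0$ depending on $c',c''$ and $\max_{\T^d}\Phi$, which completes the construction. The separability is essential here: because the mixed second derivatives of $\Phi$ vanish, uniform convexity reduces to the coordinatewise bound $f''\geq c'$, so that no pseudoconvexity in directions orthogonal to $\nabla\Phi$ needs to be verified.
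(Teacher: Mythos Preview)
Your argument is correct. The exponential substitution $\Phi=e^{\eta}$ is a clean way to read \eqref{eq:strongpseudoconvexity} as uniform convexity of $\Phi$, and the separable ansatz $\Phi(x)=\sum_{j} f(x_j)$ exploits exactly the product structure of $\omega_0$ in \eqref{eq:omegapropertiesBis}: a point outside $\overline{\omega_0}$ has \emph{all} coordinates in the good arc, so the diagonal Hessian and the gradient are controlled coordinatewise. The closing-up of $f$ across $I_{\varepsilon_0}$ is unproblematic for the reason you give.

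The paper's proof is shorter and more direct: it takes $\eta(x)=\chi(x)\,|x|^2$ on the fundamental cube $(0,2\pi)^d$, with $\chi$ a cutoff equal to $1$ on $\T^d\setminus\overline{\omega_0}$ and vanishing near the identified faces. On $\T^d\setminus\overline{\omega_0}$ this is simply $\eta(x)=|x|^2$, for which $D^2\eta=2\,\mathrm{Id}$ already gives \eqref{eq:strongpseudoconvexity} with $c=2$ without using the term $|\nabla\eta\cdot\xi|^2$ at all, while $|\nabla\eta|=2|x|$ is bounded below on the inner cube. So the paper never needs the $\eta\mapsto e^{\eta}$ reformulation or a one-dimensional building block: convexity of $|x|^2$ does the whole job, and the cutoff plays the same role as your ``repair'' of $f$ across $I_{\varepsilon_0}$. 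Your route has the pleasant feature of making transparent why the later Carleman weight $\beta=e^{\lambda(\eta+m\|\eta\|_\infty)}/t(T-t)$ is the natural object (the identity you use is precisely \eqref{eq:Hessalpha} at $\lambda=1$), but for the lemma itself the quadratic choice is the minimal construction.
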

\begin{proof}
First, let us define $\chi \in C_c^{\infty}(\T^d)$ such that $\chi = 1$ on $\T^d \setminus \overline{\omega_0}$ and $\chi = 0$ in $\omega_1 \subset \subset \omega_0$.
The function, defined by
\begin{equation}
    \eta(x) = \chi(x) |x|^2 \qquad \forall x \in (0, 2\pi)^d,
\end{equation}
can be extended to a smooth function in $\T^d$ satisfying the two expected properties \eqref{eq:propertiesetanabla}, \eqref{eq:strongpseudoconvexity}.
\end{proof}

Let us define the Carleman weights for $\lambda \geq 1$ a parameter,
\begin{equation}
\label{eq:defalphabeta}
 \alpha(t,x) = \frac{e^{2 \lambda m \|\eta\|_{\infty}} -  e^{\lambda  (\eta(x)+m\norme{\eta}_{\infty})}}{t(T-t)},\qquad \beta(t,x) = \frac{e^{\lambda (\eta(x)+m\norme{\eta}_{\infty})}}{t(T-t)}\qquad \forall (t,x) \in (0,T) \times \T^d,
\end{equation}
where $m>1$ is a fixed number.

\subsection{The Carleman estimate}

The main result of this part is the following Carleman estimate.

\begin{proposition}
\label{prop:Carlemanestimate}
There exist positive constants $C=C(\Omega,\omega)>0$, $\lambda_0 = C > 0$, $s_0 = C( T+ T^2 + T^2 |a|_{\infty} ) >0$, $b \in (1/2,1)$ such that for every $u_0 \in H^1(\T^d)$, $g \in L^2(0,T;H^1(\T^d))$, the solution $u \in X_{T}^{1,b}$ of \eqref{eq:NonlinearSchrodingerDamping_torusg} satisfies
\begin{multline}
\label{eq:Carlemanestimate}
   s^3 \lambda^4 \int_{0}^{T} \int_{\T^d} e^{-2 s \alpha} \beta^3 |u|^2 dx dt + s \lambda \int_{0}^{T} \int_{\T^d} e^{-2 s \alpha} \beta |\nabla u|^2 dx dt + s^2 \lambda^2 \int_{0}^{T} \int_{\T^d} e^{-2 s \alpha} \beta^2 | u|^4 dx dt \\
   \leq C \Big( \int_{0}^{T} \int_{\T^d} e^{-2 s \alpha}  |g|^2 dx dt +  s^3 \lambda^4 \int_{0}^{T} \int_{\omega_0} e^{-2 s \alpha} \beta^3 |u|^2 dx dt\\ + s \lambda \int_{0}^{T} \int_{\omega_0} e^{-2 s \alpha} \beta |\nabla u|^2 dx dt + s^2 \lambda^2 \int_{0}^{T} \int_{\omega_0} e^{-2 s \alpha} \beta^2 | u|^4 dx dt\Big).
\end{multline}
\end{proposition}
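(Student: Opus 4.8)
The plan is to follow the classical conjugation-and-splitting scheme for Schrödinger Carleman estimates as in \cite{MOR08}, with the decisive modification that the cubic nonlinearity is kept \emph{inside} the symmetric part of the conjugated operator. Set $v = e^{-s\alpha}u$, so that $v$ and all its derivatives vanish as $t\to 0^+$ and $t\to T^-$ (since $\alpha\to+\infty$ there and $m>1$ guarantees $\alpha>0$), killing all time-boundary contributions; there are no spatial boundary terms on $\T^d$. Since $u$ solves \eqref{eq:NonlinearSchrodingerDamping_torusg}, i.e. $(i\partial_t+\Delta-|u|^2)u = -ia u + g$, the conjugated function satisfies $Pv := e^{-s\alpha}(i\partial_t+\Delta-|u|^2)(e^{s\alpha}v) = -ia v + e^{-s\alpha}g =: G$. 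Expanding the conjugation and splitting $P=P_1+P_2$ into its formally self-adjoint and skew-adjoint parts over $L^2((0,T)\times\T^d)$ gives
\begin{align*}
P_1 v &= i\partial_t v + \Delta v + s^2|\nabla\alpha|^2 v - e^{2s\alpha}|v|^2 v,\\
P_2 v &= 2s\,\nabla\alpha\cdot\nabla v + s(\Delta\alpha)v + is(\partial_t\alpha)v,
\end{align*}
where the real potential $-e^{2s\alpha}|v|^2$ is placed in the symmetric part $P_1$. Using $\nabla\alpha=-\lambda\beta\nabla\eta$ (from \eqref{eq:defalphabeta}), I expand $\|G\|^2 = \|P_1v\|^2 + \|P_2v\|^2 + 2\Ree\langle P_1v,P_2v\rangle$ and extract the dominant positive terms from the cross term.

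The pairings of the cross term not involving the cubic potential reproduce the linear computation of \cite{MOR08}: after integration by parts in $x$ and $t$ they yield the leading zeroth-order term of size $s^3\lambda^4\int\beta^3|\nabla\eta|^2|v|^2$ together with a gradient contribution whose positivity is governed by the quadratic form $D^2\eta(\nabla v,\nabla v)+|\nabla\eta\cdot\nabla v|^2$. The genuinely new contribution comes from pairing the cubic potential $-e^{2s\alpha}|v|^2 v$ in $P_1$ with the transport term $2s\nabla\alpha\cdot\nabla v$ in $P_2$: using $2\Ree(|v|^2 v\,\overline{\nabla v})=\tfrac12\nabla|v|^4$ and integrating by parts, this produces $2s^2\lambda^2\int e^{2s\alpha}\beta^2|\nabla\eta|^2|v|^4$ up to terms of strictly lower order in $s$ and $\beta$. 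Since $e^{2s\alpha}\beta^2|v|^4 = e^{-2s\alpha}\beta^2|u|^4$, this is exactly the quartic term on the left of \eqref{eq:Carlemanestimate}, and the good sign is precisely what the defocusing sign of the nonlinearity provides; the remaining cubic pairings (with $s\Delta\alpha$ and $is\partial_t\alpha$) produce only lower-order remainders.

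I then invoke Lemma~\ref{lem:defeta}: on $\T^d\setminus\overline{\omega_0}$ one has $|\nabla\eta|\geq c$ by \eqref{eq:propertiesetanabla} and the strong pseudoconvexity \eqref{eq:strongpseudoconvexity}, which makes the three leading terms coercive and bounds the cross term from below by $c\big(s^3\lambda^4\int\beta^3|v|^2 + s\lambda\int\beta|\nabla v|^2 + s^2\lambda^2\int e^{2s\alpha}\beta^2|v|^4\big)$, minus the corresponding integrals localized on $\omega_0$ (which move to the right-hand side) and minus lower-order errors. Choosing first $\lambda\geq\lambda_0$ large to dominate the constants coming from $\Delta\eta$ and the $\lambda$-subcritical remainders, then $s\geq s_0 = C(T+T^2+T^2|a|_\infty)$, all error terms (which carry strictly lower powers of $s$, $\lambda$ or $\beta$, using $\beta\geq 4/T^2$) are absorbed into the left-hand side. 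The damping enters only through $\|G\|^2\leq 2\int a^2|v|^2 + 2\int e^{-2s\alpha}|g|^2$: the first piece is absorbed by the leading $s^3\lambda^4\int\beta^3|v|^2$ term precisely because $s^3\lambda^4\beta^3\geq 4\|a\|_\infty^2$ for $s\geq s_0$ (this is the origin of the dependence of $s_0$ on $|a|_\infty$), while the second is the source term on the right of \eqref{eq:Carlemanestimate}.

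Finally I translate back to $u$ via $|v|^2=e^{-2s\alpha}|u|^2$, $e^{2s\alpha}|v|^4=e^{-2s\alpha}|u|^4$, and $e^{-2s\alpha}|\nabla u|^2\leq 2|\nabla v|^2 + 2s^2\lambda^2\beta^2|\nabla\eta|^2|v|^2$, the last term being swallowed by the zeroth-order term since it carries a lower power of $s$; this yields \eqref{eq:Carlemanestimate}. Since $u\in X_T^{1,b}$ has only $H^1$ regularity, the integrations by parts are first justified for smooth solutions (e.g. in $X_T^{2,b}\subset C([0,T];H^2)$) and extended to $H^1$ data by density and the Lipschitz continuity of the flow from Proposition~\ref{prop:wellposedbourgain}. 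I expect the main obstacle to be the bookkeeping of the cross term: verifying that every remainder generated by the numerous integrations by parts—especially the $\partial_t\alpha$ and $\Delta\alpha$ terms responsible for the $T$ and $T^2$ contributions to $s_0$, and the cubic pairings with $s\Delta\alpha$ and $is\partial_t\alpha$—carries a strictly lower weight and is genuinely absorbable, so that only the three advertised positive terms survive.
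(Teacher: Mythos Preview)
Your proposal is correct and follows essentially the same approach as the paper: conjugation $v=e^{-s\alpha}u$, the same symmetric/skew splitting with the cubic potential $-e^{2s\alpha}|v|^2$ placed in the self-adjoint part, expansion of the cross term by integration by parts, extraction of the three coercive contributions via \eqref{eq:propertiesetanabla}--\eqref{eq:strongpseudoconvexity}, and absorption of the remainders for $s\geq s_0$. The only cosmetic difference is that your labeling of $P_1$ and $P_2$ is swapped relative to the paper's, and you make the back-translation $|\nabla u|^2\leq 2|\nabla v|^2+2s^2\lambda^2\beta^2|\nabla\eta|^2|v|^2$ explicit where the paper simply says ``reuse the expression of $\psi$''.
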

\begin{proof}
By using a standard regularization argument using Proposition \ref{prop:wellposedbourgain}, we just need to consider the case where $u \in X_{T}^{2,b}$ so in particular \eqref{eq:NonlinearSchrodingerDamping_torusg} is satisfied in the strong sense. Denote \begin{equation}
\label{eq:psiGamma}
    \psi = e^{-s \alpha} u,\ \Gamma = e^{-s \alpha} g.
\end{equation}
Let us recall that we have
$$ i \partial_t u + \Delta u = |u|^2 u - i a(x) u + g = e^{2 s \alpha} |\psi|^2 e^{s \alpha} \psi - i a(x) e^{s \alpha} \psi + e^{s \alpha} g.$$
We then have
\begin{equation}
\label{eq:P}
    P \psi := i \partial_t \psi + i s \alpha_t \psi + \Delta \psi + 2 s \nabla \alpha \cdot \nabla \psi + s (\Delta \alpha) \psi + s^2 |\nabla \alpha|^2 \psi - e^{2 s \alpha} |\psi|^2 \psi = - i a(x) \psi + \Gamma =: \Gamma_{\psi, g}.
\end{equation}
We decompose $P = P_1 + P_2$ with
\begin{equation}
    P_1 \psi = i s \alpha_t \psi + 2 s \nabla \alpha \cdot \nabla \psi + s (\Delta \alpha) \psi,
\end{equation}
\begin{equation}
    P_2 \psi = i \partial_t \psi +  \Delta \psi +  s^2 |\nabla \alpha|^2 \psi - e^{2 s \alpha} |\psi|^2 \psi.
\end{equation}

For the rest of the proof, we denote $Q_T = (0,T)\times\T^d$ and $q_T = (0,T)\times\omega_0$.

We have
\begin{equation}
    \norme{P_1 \psi + P_2 \psi}_{L^2(Q_T)}^2 = \norme{P_1 \psi}_{L^2(Q_T)}^2 + \norme{ P_2 \psi}_{L^2(Q_T)}^2 + 2 \Re\langle P_1 \psi, P_2 \psi \rangle_{L^2(Q_T)} = \norme{\Gamma_{\psi,g}}_{L^2(Q_T)}^2, 
\end{equation}
therefore
\begin{equation}
\label{eq:boundscalar}
     2 \Re\langle P_1 \psi, P_2 \psi \rangle_{L^2(Q_T)} \leq \norme{\Gamma_{\psi,g}}_{L^2(Q_T)}^2.
\end{equation}
We then decompose
\begin{equation}
\label{eq:expandscalar}
    2 \Re\langle P_1 \psi, P_2 \psi \rangle_{L^2(Q_T)} = I_1 + I_2 + I_3,
\end{equation}
with
\begin{align}
    I_1 & = 2 \Re \left(\int_{Q_T} (2 s \nabla \alpha \cdot \nabla \psi + s (\Delta \alpha) \psi)(- i \partial_t \overline{\psi} + \Delta \overline{\psi} + s^2 |\nabla \alpha|^2 \overline{\psi} - e^{2 s \alpha} |\psi|^2 \overline{\psi}) \right),\\
    I_2 & = 2 \Re \left( \int_{Q_T}i (\partial_t \alpha) \psi(- i \partial_t \overline{\psi} + \Delta \overline{\psi}) \right),\\
    I_3 &= 2 \Re \left(\int_{Q_T} i (\partial_t \alpha) \psi( s^2 |\nabla \alpha|^2 \overline{\psi} - e^{-2 s \alpha} |\psi|^2 \overline{\psi})) \right) = 0.
\end{align}
We first deal with $I_1$, decomposing as follows
\begin{align}
    I_1 & = 2 \Re \left(\int_{Q_T} (2 s \nabla \alpha \cdot \nabla \psi + s (\Delta \alpha) \psi)(\Delta \overline{\psi} + s^2 |\nabla \alpha|^2 \overline{\psi} - e^{-2 s \alpha} |\psi|^2 \overline{\psi}) \right)\\
    &\qquad - 2 \Re \left(\int_{Q_T} i(2 s \nabla \alpha \cdot \nabla \psi + s (\Delta \alpha) \psi) \partial_t \overline{\psi} \right)\\
    &= I_1^1 + I_1^2.
\end{align}

By integration by parts, we have 
\begin{equation}
\label{eq:J}
J:=\int_{Q_T} (\nabla \alpha \cdot \nabla \psi)\Delta \overline{\psi} =-\int_{Q_T} \nabla \overline{\psi}\cdot \nabla (\nabla \alpha \cdot \nabla \psi)).
\end{equation}
Moreover we have
\begin{align}
\label{eq:J1}
\nabla \overline{\psi}\cdot \nabla (\nabla \alpha \cdot \nabla \psi))
&= D^2(\alpha)(\nabla \psi,\nabla\overline{\psi})+ D^2(\psi)(\nabla \overline{\psi},\nabla \alpha),\\
2\Re D^2(\psi)(\nabla \alpha, \nabla \overline{\psi})
&=\nabla \alpha \cdot \nabla |\nabla \psi|^2.\label{eq:J2}
\end{align}
Therefore, from \eqref{eq:J}, \eqref{eq:J1}, \eqref{eq:J2} and an integration by parts, we have
\begin{align}
2\Re J&=-2 \Re\left(\int_{Q_T} D^2(\alpha)(\nabla \psi,\nabla\overline{\psi})\right) - 2 \Re\left(\int_{Q_T} D^2(\psi)(\nabla \alpha, \nabla \overline{\psi})\right)\notag\\
&= -2 \Re\left(\int_{Q_T} D^2(\alpha)(\nabla \psi,\nabla\overline{\psi})\right) +\int_{Q_T} \Delta \alpha \left|\nabla \psi\right|^2.
\end{align}
We can now expand $I_1^1$ as follows, using $\nabla |\psi|^2=2\Re (\overline{\psi} \nabla \psi)$ and $\nabla |\psi|^4=4\Re ( |\psi|^2 \overline{\psi}\nabla \psi)$,
\begin{align}
\notag
I_1^1&=2\Re \Big\{ 2sJ+\int_{Q_T} s(\Delta \alpha)\psi\Delta \overline{\psi} +\int_{Q_T} 2s^3(\nabla \alpha\cdot\nabla \psi) |\nabla \alpha|^2\overline{\psi}\\
&+\int_{Q_T} s^3(\Delta \alpha)|\psi|^2|\nabla \alpha|^2 - \int_{Q_T} 2s(\nabla \alpha\cdot\nabla \psi) e^{2 s \alpha} |\psi|^2 \overline{\psi} - s \int_{Q_T} (\Delta \alpha) e^{2 s \alpha} |\psi|^4 \Big\}\notag\\
&=4s\Re J -2s \Re \int_{Q_T} \left((\nabla \Delta \alpha )\psi+ \Delta \alpha \nabla \psi\right)\cdot \nabla \overline{\psi} -2\int_{Q_T} s^3 \nabla \cdot(|\nabla \alpha|^2 \nabla \alpha) |\psi|^2\notag\\
&  +2\int_{Q_T} s^3(\Delta \alpha)|\psi|^2|\nabla \alpha|^2 + \int_{Q_T} s (\Delta \alpha) e^{2 s \alpha} |\psi|^4 + 2 \int_{Q_T} s^2 |\nabla \alpha|^2  e^{2 s \alpha} |\psi|^4 - 2 \int_{Q_T} s (\Delta \alpha) e^{2 s \alpha} |\psi|^4\notag\\
& =-4s\Re\left(\int_{Q_T} D^2(\alpha)(\nabla \psi,\nabla\overline{\psi})\right) + s  \int_{Q_T} (\Delta^2 \alpha ) |\psi|^2 -2 s^3\int_{Q_T} \nabla \alpha \cdot \nabla (|\nabla \alpha|^2) |\psi|^2\notag\\
& + 2 s^2  \int_{Q_T} |\nabla \alpha|^2  e^{2 s \alpha} |\psi|^4 - s \int_{Q_T}  (\Delta \alpha) e^{2 s \alpha} |\psi|^4.\label{eq:expandI11}
\end{align}
We now compute $I_1^2$, using $2\Re z = z+\overline{z}$, we get by integration by parts
\begin{align*}
-I_1^2&= \int_{Q_T} i(2s\nabla \alpha\cdot\nabla \psi +s(\Delta \alpha)\psi)\overline{\psi}_t -i \int_{Q_T} (2s\nabla \alpha\cdot\nabla \overline{\psi} +s(\Delta \alpha)\overline{\psi} )\psi_t\notag\\
&= \int_{Q_T} -i\left[2s\nabla \alpha_t\cdot\nabla \psi +2s\nabla \alpha\cdot\nabla \psi_t+s(\Delta \alpha_t)\psi+s(\Delta \alpha)\psi_t\right]\overline{\psi} \\
&-i \int_{Q_T} 2s(\nabla \alpha\cdot\nabla \overline{\psi}) \psi_t -i\int_{Q_T} s(\Delta \alpha)\overline{\psi} \psi_t.
\end{align*}
The second term in the right hand side of the previous computation becomes
\begin{align}
-i \int_{Q_T} 2s(\nabla \alpha\cdot\nabla \overline{\psi}) \psi_t= 2is\int_{Q_T} (\Delta \alpha) \overline{\psi} \psi_t + 2is \int_{Q_T} (\nabla\alpha \cdot \nabla \psi_t)\overline{\psi}.
\end{align}
As a consequence, we get
\begin{align}
-I_1^2&= \int_{Q_T} -i2s(\nabla \alpha_t\cdot\nabla \psi )\overline{\psi} -i s\int_{Q_T}(\Delta \alpha_t)|\psi|^2= \int_{Q_T} -i2s(\nabla \alpha_t\cdot\nabla \psi )\overline{\psi} +i s\int_{Q_T} \nabla \alpha_t\cdot\nabla |\psi|^2\notag\\
&=i \int_{Q_T} s\nabla \alpha_t \cdot (\psi\nabla\overline{\psi}-\overline{\psi} \nabla \psi))= 2s\Re \left(i\int_{Q_T} \nabla \alpha_t \cdot (\psi\nabla \overline{\psi}))\right).\label{eq:expandI12}
\end{align}
Finally, we obtain from \eqref{eq:expandI11} and \eqref{eq:expandI12}
\begin{align}
\label{eq:expandI1}
I_1&=-4s\Re\left(\int_{Q_T} D^2(\alpha)(\nabla \psi,\nabla\overline{\psi})\right) + s  \int_{Q_T} (\Delta^2 \alpha ) |\psi|^2 -2 s^3\int_{Q_T} \nabla \alpha \cdot \nabla (|\nabla \alpha|^2) |\psi|^2\notag\\
& + 2 s^2  \int_{Q_T} |\nabla \alpha|^2  e^{2 s \alpha} |\psi|^4 - s \int_{Q_T}  (\Delta \alpha) e^{2 s \alpha} |\psi|^4-2s\Re i\int_{Q_T} \nabla \alpha_t \cdot (\psi\nabla \overline{\psi}))
\end{align}
We now turn to the other term $I_2$, we have
\begin{align}
I_2&=2\Re \int_{Q_T} is \alpha_t \psi(-i\overline{\psi}_t+\Delta \overline{\psi})= s\int_{Q_T} \alpha_t \partial_t|\psi|^2 +2s\Re i \int_{Q_T}  \alpha_t \psi\Delta \overline{\psi}\notag\\
&= - s\int_{Q_T} \alpha_{tt} |\psi|^2 - 2s\Re i \int_{Q_T}  (\nabla\alpha_t \psi + \alpha_t \nabla \psi)\cdot\nabla  \overline{\psi} \notag\\
&= - s\int_{Q_T} \alpha_{tt} |\psi|^2 - 2s\Re  \int_{Q_T}  i(\nabla\alpha_t \cdot \nabla  \overline{\psi}) \psi. \label{eq:expandI2}
\end{align}
Consequently, we get from \eqref{eq:expandscalar}, \eqref{eq:expandI1}, \eqref{eq:expandI2} and using $\nabla \alpha \cdot \nabla |\nabla \alpha|^2=2 D^2(\alpha)(\nabla \alpha,\nabla \alpha)$ that
\begin{align}\notag
2\Re (P_1\psi,P_2\psi)&=\int_{Q_T} \left[-4s^3D^2(\alpha)(\nabla \alpha,\nabla \alpha) - s \alpha_{tt}+s(\Delta^2 \alpha )\right]|\psi|^2 \\
&-4s\Re \int_{Q_T} D^2(\alpha)(\nabla \psi,\nabla\overline{\psi})\notag \\
& +  \int_{Q_T} [s^2 |\nabla \alpha|^2  e^{2 s \alpha}  - s  (\Delta \alpha) e^{2 s \alpha}]|\psi|^4\notag\\
& -4s\Re  \int_{Q_T}  i\psi\nabla\alpha_t \cdot \nabla  \overline{\psi}.
\label{eq:computationsscalar}
\end{align}

The following identities and estimates will be useful in the reminder of the proof
\begin{align}
\nabla \alpha& =- \lambda \beta \nabla\eta, \label{eq:nablaalpha}\\
D^2(\alpha)(X,Y) &=-\beta \lambda \left[D^2(\eta)(X,Y)+\lambda(\nabla \eta \cdot X)(\nabla \eta \cdot Y)\right]\qquad \forall X, Y \in \R^d.\label{eq:Hessalpha}\\
|\nabla \alpha_t| &\leq C T \lambda\beta^2,\quad |\alpha_{tt}| \leq C T^2 \beta^3,\label{eq:alphat}\\
|\Delta \alpha| &\leq C \lambda^2 \beta,\quad |\Delta^2 \alpha| \leq C \lambda^4 \beta.\label{eq:deltaalpha}
\end{align}
Then, we have from the properties of the weight \eqref{eq:propertiesetanabla}, \eqref{eq:strongpseudoconvexity} and \eqref{eq:nablaalpha}, \eqref{eq:Hessalpha}, 
\begin{align}
-4s^3D^2(\alpha)(\nabla \alpha,\nabla \alpha)&=4s^3\lambda \beta  \left[D^2(\eta)(\nabla\alpha,\nabla\alpha)+\lambda\left|\nabla \eta \cdot \nabla \alpha\right|^2\right] \geq c s^3 \lambda^4 \beta^3\ \text{in}\ \T^d \setminus \omega_0,\label{eq:termpsicarre}\\
- 4 s D^2(\alpha)(X,X)&=  s\lambda \beta \left[D^2(\eta)(X,X)+\lambda|\nabla \eta \cdot X|^2\right] \geq cs \lambda \beta |X|^2 \ \text{in}\ \T^d \setminus \omega_0,\ \forall X \in \R^d,\label{eq:termgradientpsicarre}\\
s^2 |\nabla \alpha|^2   & \geq cs^2 \lambda^2 \beta^2 \ \text{in}\ \T^d \setminus \omega_0.\label{eq:termpsi4}
\end{align}
We then have from \eqref{eq:computationsscalar}, \eqref{eq:P}, \eqref{eq:boundscalar} and \eqref{eq:termpsicarre}, \eqref{eq:termgradientpsicarre}, \eqref{eq:termpsi4},
\begin{align}
&s^3 \lambda^4 \int_{Q_T} \beta^3 |\psi|^2 + s \lambda \int_{Q_T} \beta |\nabla \psi|^2 + s^2 \lambda^2 \int_{Q_T} \beta^2 e^{2 s \alpha} |\psi|^4 \notag\\
& \leq C\Bigg(\int_{Q_T} |\Gamma|^2 + \int_{Q_T} |a|^2 |\psi|^2+\left| \int_{Q_T} \left[- s \alpha_{tt}+s(\Delta^2 \alpha )\right]|\psi|^2\right| + \left| \int_{Q_T} s  (\Delta \alpha) e^{2 s \alpha}|\psi|^4\right| \notag \\
&+ \left|4s\Re  \int_{Q_T}  i\psi\nabla\alpha_t \cdot \nabla  \overline{\psi}\right| \notag\\
& + s^3 \lambda^4 \int_{q_T} \beta^3 |\psi|^2 + s \lambda \int_{q_T} \beta |\nabla \psi|^2 + s^2 \lambda^2 \int_{q_T} \beta^2 e^{2 s \alpha} |\psi|^4\Bigg).\label{eq:finalcomputationsscalarbeforeabsorb}
\end{align}
Now, we will absorb some right hand side terms in \eqref{eq:finalcomputationsscalarbeforeabsorb}. Take $\varepsilon>0$ a small positive number and $C_{\varepsilon}>0$ a positive constant depending only of $\varepsilon$ that can vary from one line to another, we have from \eqref{eq:alphat}, \eqref{eq:deltaalpha} that
\begin{equation}
\label{eq:absorba}
\int_{Q_T} |a|^2 |\psi|^2 \leq |a|_{\infty}^2 \int_{Q_T} |\psi|^2 \leq \varepsilon s^3 \lambda^4 \int_{Q_T} \beta^3 |\psi|^2\ \text{for}\ s \geq C_{\varepsilon} T^2 |a|_{\infty}^{2/3},
\end{equation}
\begin{equation}
\label{eq:absborbalphatt}
  s  \int_{Q_T} |\alpha_{tt}| |\psi|^2 \leq C s T^2 \int_{Q_T} \beta^3 |\psi|^2 \leq \varepsilon s^3 \lambda^4 \int_{Q_T} \beta^3 |\psi|^2 \ \text{for}\ s^2 \geq C_{\varepsilon} T^2\ \text{i.e.}\ s \geq C_{\varepsilon} T,
\end{equation}
\begin{equation}
\label{eq:absborbdeltaalpha}
  s  \int_{Q_T} |\Delta^2 \alpha| |\psi|^2 \leq C s \lambda^4 \int_{Q_T} \beta |\psi|^2 \leq \varepsilon s^3 \lambda^4 \int_{Q_T} \beta^3 |\psi|^2 \ \text{for}\ s^2 \geq C_{\varepsilon} \beta^{-2}\ \text{i.e.}\ s \geq C_{\varepsilon} T^2,
\end{equation}
\begin{equation}
\label{eq:absborbdeltaalphapsi4}
  s  \int_{Q_T} |\Delta \alpha| e^{2 s \alpha} |\psi|^4 \leq C s \lambda^2 \int_{Q_T} \beta e^{2 s \alpha} |\psi|^4 \leq \varepsilon s^2 \lambda^2 \int_{Q_T} \beta¨2 e^{2 s \alpha} |\psi|^4 \ \text{for}\ s \geq C_{\varepsilon} \beta^{-1}\ \text{i.e.}\ s \geq C_{\varepsilon} T^2,
\end{equation}
\begin{align}
\left|4s\Re  \int_{Q_T}  i\psi\nabla\alpha_t \cdot \nabla  \overline{\psi}\right| &\leq  C  T s \lambda \int_{Q_T} \beta^2 |\nabla \psi| |\psi|\notag\\
& \leq \varepsilon s \lambda \int_{Q_T} \beta |\nabla \psi|^2 + C_{\varepsilon} s \lambda T^2 \int_{Q_T} \beta^3 |\psi|^2\notag\\
&\leq  \varepsilon s \lambda \int_{Q_T} \beta |\nabla \psi|^2 +  \varepsilon s^3 \lambda^4 \int_{Q_T} \beta^3 |\psi|^2\ \text{for}\ s \geq C_{\varepsilon} T.
\label{eq:absborblast}
\end{align}
We finally get from \eqref{eq:absorba}, \eqref{eq:absborbalphatt}, \eqref{eq:absborbdeltaalpha}, \eqref{eq:absborbdeltaalphapsi4}, \eqref{eq:absborblast} and \eqref{eq:finalcomputationsscalarbeforeabsorb} that 
\begin{align}
&s^3 \lambda^4 \int_{Q_T} \beta^3 |\psi|^2 + s \lambda \int_{Q_T} \beta |\nabla \psi|^2 + s^2 \lambda^2 \int_{Q_T} \beta^2 e^{2 s \alpha} |\psi|^4\notag\\
& \leq C\Bigg(\int_{Q_T} |\Gamma|^2 +   s^3 \lambda^4 \int_{q_T} \beta^3 |\psi|^2 + s \lambda \int_{q_T} \beta |\nabla \psi|^2 + s^2 \lambda^2 \int_{q_T} \beta^2 e^{2 s \alpha} |\psi|^4\Bigg). \label{eq:afterabsorb}
\end{align}

Now we reuse the expression of $\psi$ in function of $u$ and $\Gamma$ in function of $g$ given in \eqref{eq:psiGamma} to get the desired Carleman estimate \eqref{eq:Carlemanestimate}.

\end{proof}

\subsection{From the Carleman estimate to the observability inequality}
The goal of this part is to obtain an observability inequality for \eqref{eq:NonlinearSchrodingerDamping_torusg}, starting from the Carleman estimate previously obtained in Proposition \ref{prop:Carlemanestimate} and energy, multipliers estimates stated in Proposition \ref{prop:identity_estimates}.

\begin{proposition}\label{prop:observabilityinequality}
There exist a positive constant $C=C(\Omega,\omega,|a|_{\infty})>0$ and $b \in (1/2,1)$ such that for every $u_0 \in H^1(\T^d)$, the solution $u \in X_{T}^{1,b}$ of \eqref{eq:NonlinearSchrodingerDamping_torusg} satisfies
\begin{equation}
    \label{eq:ObsSchrodinger}
    \forall t \in [0,T], \quad E(t) \leq \exp\left(C\left(1+\frac{1}{T}\right)\right) \int_{0}^{T} \int_{\T^d}(|u|^2 + |\nabla u|^2 +|u|^{4}  ) a(x) dxds.
\end{equation}
\end{proposition}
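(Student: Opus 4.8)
The plan is to turn the global space--time Carleman estimate \eqref{eq:Carlemanestimate} into the pointwise-in-time bound \eqref{eq:ObsSchrodinger} for the damped equation (i.e. $g=0$ in \eqref{eq:NonlinearSchrodingerDamping_torusg}, which is why no source appears on the right-hand side of \eqref{eq:ObsSchrodinger}). First I would freeze the parameters $\lambda=\lambda_0$ and $s=s_0$ provided by Proposition \ref{prop:Carlemanestimate}, so that the Carleman weights in \eqref{eq:defalphabeta} become fixed (but $T$-dependent) functions. The argument then proceeds in three movements: extract a clean unweighted energy integral on a middle time window from the left-hand side of \eqref{eq:Carlemanestimate}; dominate the local right-hand side by the observation $\int_0^T\int_{\T^d} a(|u|^2+|\nabla u|^2+|u|^4)$; and finally transfer the information from one time to all times.

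For the first two movements I would localize in time. On $[T/4,3T/4]\times\T^d$ the quantity $t(T-t)$ is bounded below, so each weight $e^{-2s_0\alpha}\beta^{j}$ is bounded below by a positive constant; dropping the weights, the left-hand side of \eqref{eq:Carlemanestimate} controls $c(T)\int_{T/4}^{3T/4}\int_{\T^d}(|u|^2+|\nabla u|^2+|u|^4)$. On $(0,T)\times\overline{\omega_0}$ the same weights are bounded above by some $M(T)$, and since $a\ge a_0>0$ on $\omega_0$ while $a\ge 0$ on $\T^d$, every local term on the right of \eqref{eq:Carlemanestimate} is at most $\tfrac{M(T)}{a_0}\int_0^T\int_{\T^d} a(|u|^2+|\nabla u|^2+|u|^4)$. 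Using $E(s)\le\int_{\T^d}(|u|^2+|\nabla u|^2+|u|^4)$ and chaining through \eqref{eq:Carlemanestimate} yields $\int_{T/4}^{3T/4}E(s)\,ds\le \tfrac{M(T)}{c(T)a_0}\int_0^T\int_{\T^d} a(|u|^2+|\nabla u|^2+|u|^4)$. The ratio $M(T)/c(T)$ of the weight bounds, together with the choice $s_0\sim T+T^2+T^2|a|_\infty$, is precisely what produces the singular factor $\exp(C/T)$, reflecting the degeneracy of $\alpha$ as $t\to 0,T$.

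It remains to pass from this time-averaged bound to a bound at every $t\in[0,T]$. By the mean value theorem I would pick $t^\ast\in[T/4,3T/4]$ with $E(t^\ast)\le \tfrac{2}{T}\int_{T/4}^{3T/4}E(s)\,ds$, and then propagate this control to all times. The subtle point — and what I expect to be the main obstacle — is that the total energy $E$ is \emph{not} monotone: although the $L^2$-mass is nonincreasing by \eqref{eq:L2_identity}, differentiating $E$ via \eqref{eq:energy_identity} produces the damping term $\int a\,\Im(u\partial_t\overline{u})$, which is not signed and involves $\partial_t u$. The remedy is to re-express this term through the Morawetz multiplier identity \eqref{eq:multiplier_estimate_Pg} with $P=a$, exactly as in \eqref{eq:firstestimatenergy}: this eliminates $\partial_t u$ and leaves only $\int(|\nabla u|^2+|u|^4+|u|^2)$, all dominated by $E$, the defocusing sign and $a\ge 0$ guaranteeing that the remaining contributions either have the right sign or are absorbed by Young's inequality. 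One thus obtains the two-sided differential inequality $|\tfrac{d}{dt}E(t)|\le C\,E(t)$, and Gronwall gives $E(t)\le e^{C|t-t^\ast|}E(t^\ast)\le e^{CT}E(t^\ast)$ for every $t\in[0,T]$.

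Chaining the three movements gives, for all $t\in[0,T]$, the estimate $E(t)\le e^{CT}\,\tfrac{2}{T}\,\tfrac{M(T)}{c(T)a_0}\int_0^T\int_{\T^d} a(|u|^2+|\nabla u|^2+|u|^4)$; the product of the Carleman weight-ratio $M(T)/c(T)$ (singular as $T\to0$) with the energy-propagation factor $e^{CT}$ is then estimated by a single exponential of the form $\exp(C(1+1/T))$, which yields \eqref{eq:ObsSchrodinger}. The heart of the argument is thus the transfer step: controlling the time derivative of the genuinely nonlinear, non-monotone energy, which is where the multiplier identity of Proposition \ref{prop:identity_estimates} and the defocusing structure are essential, rather than any linear dissipation estimate.
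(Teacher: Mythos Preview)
Your proposal is correct and follows the paper's overall strategy: freeze $\lambda=\lambda_0$, $s=s_0$, strip the Carleman weights on $[T/4,3T/4]$ and bound the local right-hand side via $a\ge a_0$ on $\omega_0$ to obtain $\int_{T/4}^{3T/4}E(s)\,ds\le \exp\!\big(C(1+1/T)\big)\int_0^T\!\int_{\T^d} a(|u|^2+|\nabla u|^2+|u|^4)$, and then propagate in time using the multiplier identity \eqref{eq:multiplier_estimate_Pg} with $P=a$ to replace $\Im(u\partial_t\bar u)$.

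The only genuine difference is in the transfer step. You select a single time $t^\ast$ by the mean value theorem and run a Gronwall argument on the differential inequality $|\tfrac{d}{dt}E|\le CE$. The paper instead works in two stages: it first proves an $L^2$ observability \eqref{eq:obsL2} directly from \eqref{eq:L2_identity} (integrating the identity over $t'\in[T/4,3T/4]$), and then computes the exact increment $E(t')-E(t)$ via \eqref{eq:energy_identity} and \eqref{eq:multiplier_estimate_Pg}, integrates by parts to turn the $\nabla a\cdot\nabla|u|^2$ term into $\tfrac12\int\Delta a\,|u|^2$, and controls this last residue by the already established $L^2$ observability before integrating in $t'$. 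Both routes hinge on the same multiplier trick and produce the same $\exp(C(1+1/T))$ constant; your Gronwall approach is a bit more streamlined, while the paper's two-step version isolates the role of the $L^2$-mass more explicitly.
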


\begin{proof}
From the properties of the weights \eqref{eq:defalphabeta} and from the choices of $\lambda$, $s$ in Proposition \ref{prop:Carlemanestimate}, we deduce that
\begin{align*}
    e^{-2 s \alpha} (\beta +\beta^2+\beta^3)  &\geq \exp\left(- C\left(1+\frac{1}{T} \right)\right)\ \text{in}\ (T/4,3T/4) \times \T^d,\\
     e^{-2 s \alpha} (1+\beta^3 + \beta + \beta^2) &\leq C\left(1+\frac{1}{T^6}\right)\ \text{in}\ (0,T) \times \T^d.
\end{align*}
We then obtain from the Carleman estimate \eqref{eq:Carlemanestimate} and the property of $a$ in $\omega_0$ that
\begin{equation}
\label{eq:obsT4}
\int_{T/4}^{3T/4} E(t) dt 
\leq \exp\left(C\left(1+\frac{1}{T} \right)\right) \int_0^T \int_{\T^d} (|u|^2 + |\nabla u|^2 +|u|^{4}  ) a(x) dx dt.
\end{equation}
As a first step, let us show that 
\begin{equation}\label{eq:obsL2}
\forall t \in [0,T], \quad \int_{\T^d} |u(t,x)|^2 dx \leq \exp\left(C\left(1+\frac{1}{T}\right)\right) \int_{0}^{T} \int_{\T^d}(|u|^2 + |\nabla u|^2 +|u|^{4}  ) a(x) dxds.
\end{equation}
Indeed, thanks to \eqref{eq:L2_identity}, we have for all $t, t' \in [0,T]$, 
\begin{equation*}
    \int_{\T^d} |u(t, x)|^2 dx \leq \int_0^T \int_{\T^d} a(x) |u(s,x)|^2 dx ds + \int_{\T^d} |u(t',x)|^2 dx.
\end{equation*}
By integrating on $\{T/4 \leq t' \leq 3T/4\}$, we deduce from the above estimate together with \eqref{eq:obsT4} that
\begin{equation*}
    \int_{\T^d} |u(t, x)|^2 dx \leq \left(1+\frac 2T\exp\left(C\left(1+ \frac{1}{T}\right)\right)\right) \int_{0}^{T} \int_{\T^d}(|u|^2 + |\nabla u|^2 +|u|^{4}  ) a(x) dxds,
\end{equation*}
which proves \eqref{eq:obsL2} for a suitable constant $C>0$.

By now, let us deal with the whole energy $E(t)$. Notice that from the identities \eqref{eq:L2_identity} and \eqref{eq:energy_identity}, we have for all $0 \leq t \leq t' \leq T$,
\begin{equation*}
    E(t')-E(t) = -\int_t^{t'} \int_{\T^d} a(x) |u(x,s)|^2 dx ds -\int_t^{t'} \int_{\T^d} a(x) \Ima{(u \partial_t \overline u)} ds dx.
\end{equation*}
Moreover, by using \eqref{eq:multiplier_estimate_Pg} with $P=a$, this leads to 
\begin{align*}
    &E(t')-E(t)\\
    & = -\int_t^{t'} \int_{\T^d} a(x) |u(x,s)|^2 dx ds -\frac 12 \int_t^{t'} \int_{\T^d} \nabla a(x)\cdot \nabla(|u|^2) dx ds -\int_t^{t'} \int_{\T^d} a(x) (|\nabla u|^2 +|u|^4) dx ds\\
    &= -\int_t^{t'} \int_{\T^d} a(x) |u(x,s)|^2 dx ds +\frac 12 \int_t^{t'} \int_{\T^d} \Delta a(x)  |u|^2 dx ds -\int_t^{t'} \int_{\T^d} a(x) (|\nabla u|^2 +|u|^4) dx ds.
\end{align*}
We then deduce that for all $t, t' \in [0,T]$, 
\begin{equation}
    E(t) \leq \int_0^{T} \int_{\T^d} a(x) (|u|^2+|\nabla u|^2 +|u|^4) dx ds + \frac{\|\Delta a\|_{L^{\infty}}}2 \int_0^T \int_{\T^d}|u(s,x)|^2 dxds + E(t').
\end{equation}
After integrating on $\{T/4 \leq t' \leq 3T/4\}$, the conclusion of Proposition \ref{prop:observabilityinequality} follows from \eqref{eq:obsT4} and \eqref{eq:obsL2}.
\end{proof}
From Proposition \ref{prop:observabilityinequality}, we finally obtain the following useful result.
\begin{corollary}
There exist a positive constant $C=C(\Omega,\omega,|a|_{\infty})>0$ and $b \in (1/2,1)$ such that for every $u_0 \in H^1(\T^d)$, the solution $u \in X_{T}^{1,b}$ of \eqref{eq:NonlinearSchrodingerDamping_torusg} satisfies
\begin{equation}\label{eq:energy_observability}
E(0) +\int_0^T E(t) dt \leq \exp\left(C\left(1+\frac{1}{T}\right)\right) \int_0^T \int_{\T^d} a(x) (|u(t,x)|^2 +|\nabla u(t,x)|^2 +|u(t,x)|^{4}) dx dt.
\end{equation}
\end{corollary}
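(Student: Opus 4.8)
The plan is to read off \eqref{eq:energy_observability} directly from the pointwise-in-time observability already established in Proposition~\ref{prop:observabilityinequality}, so that essentially no new analysis is required: the whole analytic weight of the argument --- the Carleman estimate of Proposition~\ref{prop:Carlemanestimate} together with the energy and multiplier identities of Proposition~\ref{prop:identity_estimates} --- is already absorbed into \eqref{eq:ObsSchrodinger}. Concretely, writing $\mathrm{Obs}(u) := \int_0^T\int_{\T^d} a(x)(|u|^2+|\nabla u|^2+|u|^4)\,dx\,dt$ for the observation term, Proposition~\ref{prop:observabilityinequality} supplies the uniform bound $E(t)\le \exp(C(1+1/T))\,\mathrm{Obs}(u)$ valid for \emph{every} $t\in[0,T]$, and the Corollary is merely a convenient repackaging of this fact for the exponential-decay argument carried out in the next section.

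First I would specialize the uniform estimate to $t=0$, which immediately yields $E(0)\le \exp(C(1+1/T))\,\mathrm{Obs}(u)$. Next I would integrate the same uniform estimate over $t\in[0,T]$; since the right-hand side of \eqref{eq:ObsSchrodinger} is independent of $t$, this produces $\int_0^T E(t)\,dt\le T\exp(C(1+1/T))\,\mathrm{Obs}(u)$. Adding the two inequalities gives
\begin{equation*}
E(0)+\int_0^T E(t)\,dt\le (1+T)\exp\!\left(C\left(1+\tfrac1T\right)\right)\mathrm{Obs}(u).
\end{equation*}

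It then remains only to absorb the polynomial prefactor $1+T$ into the exponential constant, replacing $C$ by a larger $C'$; this is the single, purely cosmetic, point to verify, and it is the only place where one must be mildly careful. It is harmless because the decisive singular behavior of the prefactor is the blow-up $e^{C/T}$ as $T\to0^+$ --- precisely the regime that will dictate the decay rate --- which dominates $1+T$ there, while on the bounded time horizons over which \eqref{eq:energy_observability} is invoked in the sequel the factor $1+T$ is itself bounded and is thus subsumed into $\exp(C(1+1/T))\ge e^{C}$ after enlarging the constant. In short, the genuine difficulty lay entirely in Proposition~\ref{prop:observabilityinequality}, and the Corollary follows by evaluation at $t=0$, integration in time, and this benign adjustment of the constant; I do not anticipate any real obstacle at this stage.
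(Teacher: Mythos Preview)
Your proposal is correct and matches the paper's approach: the paper gives no explicit proof at all, merely stating that the Corollary follows from Proposition~\ref{prop:observabilityinequality}, and your argument (evaluate \eqref{eq:ObsSchrodinger} at $t=0$, integrate it over $[0,T]$, add) is precisely how one fills in that implication. Your caveat about absorbing the factor $1+T$ into the exponential is also accurate---the bound $(1+T)\le \exp\bigl(C'(1+1/T)\bigr)$ cannot hold with a $T$-independent $C'$ as $T\to\infty$, but in the sequel the estimate is applied only at a fixed time horizon, so this is immaterial.
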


\section{Proof of the main results}
\label{sec:proofmainresults}

\subsection{Exponential decay of the solution to the nonlinear equation}
\label{sec:expdecay}

The goal of this part is to prove Theorem \ref{thm:stabilisation_main_result}. 

We first state a technical lemma that would be useful in the sequel, it comes from \cite[Lemma~4.4]{YNC21}.
\begin{lemma}\label{lem:majoration_derivee}
Let $a \in \mathcal C^1(\T^d)$ be a non-negative real function. For all $\varepsilon>0$, there exists a positive constant $C_{\varepsilon}>0$ such that 
$$\forall x \in \T^d, \quad |\nabla a(x)|^2 \leq C_{\varepsilon} a(x)+ \varepsilon.$$
\end{lemma}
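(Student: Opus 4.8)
The plan is to split $\T^d$ into the region where $|\nabla a|$ is large and the region where it is small, and to exploit one structural fact about non-negative $C^1$ functions on a boundaryless compact manifold: \emph{the gradient of $a$ vanishes at every zero of $a$}. Indeed, if $a(x_0)=0$ then, since $a \geq 0 = a(x_0)$, the point $x_0$ is a global minimum of $a$; because $\T^d$ has no boundary, $x_0$ is an interior extremum of the differentiable function $a$, so $\nabla a(x_0)=0$. This is the only genuinely non-routine ingredient, and it is exactly where the hypotheses ``non-negative'' and ``no boundary'' are used.

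Concretely, I would fix $\varepsilon>0$ and introduce the superlevel set of the gradient $K_\varepsilon = \{x \in \T^d : |\nabla a(x)|^2 \geq \varepsilon\}$, which is closed and hence compact. If $K_\varepsilon=\emptyset$, then $|\nabla a|^2 < \varepsilon$ everywhere and the inequality holds trivially with any $C_\varepsilon>0$, using $a \geq 0$. Otherwise, the observation above shows $\nabla a \neq 0$ on $K_\varepsilon$, hence $a>0$ there; by continuity of $a$ and compactness of $K_\varepsilon$, the quantity $m_\varepsilon := \min_{K_\varepsilon} a$ is attained and strictly positive. Since $a \in C^1(\T^d)$ and $\T^d$ is compact, the gradient is bounded, $M := \|\nabla a\|_{L^\infty(\T^d)} < \infty$.

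The conclusion then follows from a two-case estimate with $C_\varepsilon := M^2/m_\varepsilon$. On $K_\varepsilon$, using $a(x)\geq m_\varepsilon$, one has $|\nabla a(x)|^2 \leq M^2 = (M^2/m_\varepsilon)\, m_\varepsilon \leq C_\varepsilon\, a(x)$. On the complement $\T^d \setminus K_\varepsilon$, by definition $|\nabla a(x)|^2 < \varepsilon \leq C_\varepsilon\, a(x)+\varepsilon$ since $a \geq 0$. Combining the two cases yields $|\nabla a(x)|^2 \leq C_\varepsilon\, a(x)+\varepsilon$ on all of $\T^d$, as desired. I do not anticipate any real difficulty beyond the gradient-vanishing step; once that is in hand the argument is a short compactness estimate, and no regularity beyond $C^1$ is needed precisely because the $+\varepsilon$ term absorbs the behaviour near the zero set.
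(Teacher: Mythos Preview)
Your proof is correct. Both your argument and the paper's rely on the same key observation: if $a(x_0)=0$ then $x_0$ is a global minimum of the non-negative function $a$ on the boundaryless manifold $\T^d$, hence $\nabla a(x_0)=0$. The organisation, however, is different. The paper argues by contradiction: assuming the estimate fails for some $\varepsilon_0>0$, it extracts a sequence $(x_n)$ with $|\nabla a(x_n)|^2 \geq n\,a(x_n)+\varepsilon_0$, passes to a convergent subsequence with limit $x_\infty$, deduces $a(x_\infty)=0$ (since $|\nabla a|$ is bounded while $n\to\infty$), and then obtains the contradiction $0<\varepsilon_0\leq |\nabla a(x_\infty)|^2=0$. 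Your argument is direct and constructive: you split $\T^d$ according to the superlevel set $K_\varepsilon=\{|\nabla a|^2\geq\varepsilon\}$, use the key observation to show $a>0$ on $K_\varepsilon$, and then produce an explicit constant $C_\varepsilon=\|\nabla a\|_{L^\infty}^2/\min_{K_\varepsilon}a$. The direct route has the minor advantage of exhibiting $C_\varepsilon$ explicitly, while the contradiction argument is slightly shorter to write; in substance the two proofs are equivalent reformulations of the same compactness idea.
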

\begin{proof}
Let us proceed by contradiction and assume that there exist $\varepsilon_0>0$ and a sequence $(x_n)_{n \in \nn} \subset \T^d$ such that for all $n \in \nn$,
\begin{equation}\label{eq:contradiction_nabla_a}
|\nabla a(x_n)|^2 \geq n a(x_n)+ \varepsilon_0.
\end{equation}
Up to a subsequence, we can assume that $(x_n)_{n \in \nn}$ tends to some $x_\infty \in \T^d$. Since $a$ is non-negative and $\varepsilon_0>0$, \eqref{eq:contradiction_nabla_a} implies that $a(x_\infty)=0$. In particular, $x_\infty$ minimizes $a$ and we obtain $\nabla a(x_\infty)=0$. The contradiction then follows from the fact that $0< \varepsilon_0 \leq |\nabla a(x_\infty)|^2=0.$
\end{proof}

\begin{proof}[Proof of Theorem~\ref{thm:stabilisation_main_result}]
We first express the right hand side of the observability estimate \eqref{eq:energy_observability} thanks to the total energy of the system. We proceed as follows. From \eqref{eq:L2_identity}, we have
\begin{equation*}
 \int_0^{T} \int_{\T^d} a(x) |u(s, x)|^2 dx ds =  \frac{1}{2} \int_{\T^d} |u(0,x)|^2 dx - \frac{1}{2} \int_{\T^d} |u(T, x)|^2 dx,
\end{equation*}
From \eqref{eq:multiplier_estimate_Pg} with $P=a$ together with \eqref{eq:energy_identity}, we have
\begin{multline*}
\int_0^{T}\int_{\T^d} a(x)( |\nabla u|^2+  |u|^{4}) dx ds 
= \int_0^{T}\int_{\T^d} a(x) (\Im(u \partial_t \overline u)) dx ds - \frac 12 \int_0^{T} \int_{\T^d} (\nabla a(x)\cdot \nabla)(|u|^2) dx ds \\
= \frac{1}{2} \int_{\T^d} |\nabla u(0,x)|^2 dx + \frac{1}{4} \int_{\T^d}|u(0, x)|^{4} dx - \frac{1}{2}\int_{\T^d} |\nabla u(T, x)|^2 dx - \frac{1}{4}  \int_{\T^d}|u(T, x)|^{4} dx \\
 - \frac 12 \int_0^{T} \int_{\T^d} (\nabla a(x)\cdot \nabla)(|u|^2) dx ds
\end{multline*}
We sum the last two previous identities and we use the observability estimate \eqref{eq:energy_observability} to get that
%
\begin{equation}\label{eq:energy_obs_estimate1}
    E(0) + \int_0^T E(t)dt \leq \frac{C_T}2\int_0^T \int_{\T^d} |\nabla a(x)| |\nabla u(t,x)||u(t,x)| dxdt + C_T(E(0)-E(T)).
\end{equation}
Let $\varepsilon>0$ to be chosen later. According to Lemma~\ref{lem:majoration_derivee}, there exists a positive constant $C_{\varepsilon}>0$ such that 
$$\forall x \in \T^d, \quad |\nabla a(x)|^2 \leq C_{\varepsilon} a(x) +\varepsilon.$$
We therefore deduce from \eqref{eq:energy_obs_estimate1} and the $L^2$-identity \eqref{eq:L2_identity} that there exists a new constant $C'_{\varepsilon}>0$ such that
\begin{align}
    &E(0) +\int_0^T E(t) dt\notag\\
    &\leq \frac{C_T}{2} \left( C'_{\varepsilon}\int_0^T \int_{\T^d}  a(x) | u(t,x)|^2 dx dt + E(0)-E(T)+\varepsilon \int_0^T E(t)dt \right) \nonumber\\
    & \leq \frac{C_T}{2} \left( \frac{C'_{\varepsilon}}2 \left(\|u(0,\cdot)\|^2_{L^2(\T^d)}-\|u(T,\cdot)\|^2_{L^2(\T^d)}\right) + E(0)-E(T)+\varepsilon \int_0^T E(t)dt \right).
\end{align}
By now, we choose $\varepsilon =C_T^{-1}$. This readily provides 
\begin{equation}\label{eq:energy_obs_estimate2}
    E(0) +\int_0^T E(t) dt \leq C_T \left( \tilde{C}_T\left(\|u(0,\cdot)\|^2_{L^2(\T^d)}-\|u(T,\cdot)\|^2_{L^2(\T^d)}\right) + E(0)-E(T) \right),
\end{equation}
where $\tilde{C}_T>0$ is a new positive constant depending only on $T$.

Let us define an auxiliary energy by 
$$\forall t \geq 0, \quad \tilde{E}(t) =E(t)+\tilde{C}_T \|u(t,\cdot)\|^2_{L^2(\T^d)},$$
which satisfies for all $t \geq 0$,
\begin{equation*}
    E(t) \leq \tilde{E}(t) \leq (1+\tilde{C}_T) E(t).
\end{equation*}
From \eqref{eq:energy_obs_estimate2}, we have
\begin{equation}
    \tilde{E}(0) \leq \hat{C}_T(\tilde{E}(0)-\tilde{E}(T)),
\end{equation}
where $\hat{C}_T= (1+\tilde{C}_T) C_T$. This last inequality directly implies that 
$$\tilde{E}(T) \leq \frac{\hat{C}_T-1}{\hat{C}_T} \tilde{E}(0).$$
On the other hand, thanks to \eqref{eq:L2_identity}, \eqref{eq:energy_identity}, \eqref{eq:multiplier_estimate_Pg} and the Gronwall's inequality from Proposition \ref{prop:energy_estimates}, one can readily obtain that there exists a positive constant $M_T>0$ such that
$$\forall 0\leq t \leq T, \quad \tilde{E}(t) \leq M_T \tilde{E}(0).$$
Finally, we obtain that there exists two positive constants $K, \gamma>0$ such that for all $t\geq 0$,
\begin{equation*}
    \tilde E(t) \leq K e^{-\gamma t} \tilde E(0)
\end{equation*}
and then,
\begin{equation*}
    \forall t \geq 0, \quad E(t) \leq (1+\tilde{C}_T)Ke^{-\gamma t} E(0).
\end{equation*}
This concludes the proof of Theorem~\ref{thm:stabilisation_main_result}.
\end{proof}

\subsection{Global null-controllability of the nonlinear equation}
\label{sec:controlresults}

This section is devoted to the proof of the Theorem~\ref{thm:controlresult}. We adopt the classical strategy (see for example \cite{Lau10b}, \cite{Lau14}) which consists in using our stabilisation result Theorem~\ref{thm:stabilisation_main_result} after proving a local controllability result near to $0$.

Let $\varphi \in \mathcal C^{\infty}_c(0,T)$ be a nonnegative function different from zero.

\subsubsection{First step: study of the linear system.}
Before studying the local controllability of nonlinear equation, let us consider the linear system
\begin{equation}
	\label{eq:freelinearSchrodinger_torus}
		\left\{
			\begin{array}{ll}
				i  \partial_t \Psi  = -\Delta \Psi + a^{2}(x) \varphi^2(t) e^{it\Delta} \phi_0 & \text{ in }  (0,+\infty) \times \T^d, \\
				\Psi(T, \cdot) = 0 & \text{ in } \T^d,
			\end{array}
		\right.
\end{equation}
for $\phi_0 \in L^2(\T^d)$.
Let us define the linear operator 
$$\begin{array}{llll}
S :& L^2(\T^d) &\longrightarrow & L^2(\T^d)\\
    & \phi_0 & \longmapsto & \Psi(0,\cdot).
\end{array}$$
where $\Psi$ is the mild solution of \eqref{eq:freelinearSchrodinger_torus}. One can easily check that $S$ is an injective continuous map. Let us highlight that the surjectivity of $S$ would lead to the exact controllability of the linear system \eqref{eq:freelinearSchrodinger_torus}. Thanks to the Hilbert Uniqueness Method, the question of its surjectivity is equivalent to the observability estimates
$$\exists C_{a,\varphi} >0, \forall u_0 \in L^2(\T^d), \quad \|u_0\|^2_{L^2(\T^d)} \leq C_{a, \varphi} \int_{\rr} \varphi(t)^2 \|a e^{it\Delta} u_0\|^2_{L^2(\T^d)} dt,$$
which are known to hold in any dimension, see \cite[Theorem~4]{AM14}.
The linear map $S$ is therefore an isomorphism from $L^2(\T^d)$ to $L^2(\T^d)$. Actually, the following proposition states that $S$ is also an isomorphism from $H^1(\T^d)$ to $H^1(\T^d)$.
\begin{proposition}[{\cite[Lemma~3.1]{Lau10b}}]\label{prop:control_operator}
The Sobolev space $H^1(\T^d)$ is $S$ invariant and $S : H^1(\T^d) \longrightarrow H^1(\T^d)$ is an isomorphism.
\end{proposition}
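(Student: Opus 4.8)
The plan is to prove that $S : H^1(\T^d) \longrightarrow H^1(\T^d)$ is an isomorphism by first showing $H^1$-invariance and continuity, and then deducing surjectivity from the already-established $L^2$-isomorphism property together with elliptic regularity. The key structural observation is that the source term in \eqref{eq:freelinearSchrodinger_torus}, namely $a^2(x)\varphi^2(t) e^{it\Delta}\phi_0$, enjoys a smoothing feature: although $e^{it\Delta}$ preserves $H^s$ norms exactly, the multiplication by the smooth cutoffs $a^2 \in C_c^\infty(\omega)$ and $\varphi^2 \in C_c^\infty(0,T)$ means the forcing is spatially localized and temporally compactly supported. I would exploit this to commute derivatives through the Duhamel formula.

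First I would write the mild solution explicitly via Duhamel as
\begin{equation*}
\Psi(0,\cdot) = -i \int_T^0 e^{-is\Delta}\left[a^2 \varphi^2(s) e^{is\Delta}\phi_0\right]\, ds = i\int_0^T e^{-is\Delta} a^2 \varphi^2(s) e^{is\Delta}\phi_0 \, ds,
\end{equation*}
so that $S\phi_0 = i\int_0^T \varphi^2(s)\, e^{-is\Delta} a^2 e^{is\Delta}\phi_0\, ds$. To prove $H^1$-invariance and boundedness, I would apply $\nabla$ and commute it past $e^{is\Delta}$ (which commutes exactly with all Fourier multipliers), leaving only the commutator $[\nabla, a^2] = (\nabla a^2)$ as a zeroth-order multiplication operator. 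Since $\nabla(a^2) = 2a\nabla a$ is a bounded smooth function, the resulting terms are controlled by $\|\phi_0\|_{H^1}$ uniformly in $s \in [0,T]$, and integrating against the bounded $\varphi^2$ gives $\|S\phi_0\|_{H^1(\T^d)} \leq C \|\phi_0\|_{H^1(\T^d)}$. This establishes that $S$ restricts to a bounded operator on $H^1$.

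For surjectivity on $H^1$, I would argue as follows. Given a target $v \in H^1(\T^d) \subset L^2(\T^d)$, the $L^2$-isomorphism property provides a unique $\phi_0 \in L^2(\T^d)$ with $S\phi_0 = v$. The task is to upgrade the regularity of $\phi_0$ from $L^2$ to $H^1$. This is the point where I expect the main obstacle to lie, since $S$ is not elliptic in the classical sense and one cannot directly invert to gain derivatives. The cleanest route, and the one I anticipate the referenced \cite[Lemma~3.1]{Lau10b} uses, is to establish a two-sided \emph{observability-type estimate at $H^1$ regularity}: one shows that the adjoint observability inequality holds in $H^{-1}$, i.e. that there is $C>0$ with $\|\phi_0\|_{H^1}^2 \leq C\int_{\R}\varphi(t)^2 \|a\, e^{it\Delta}\phi_0\|_{H^1}^2\, dt$, obtained by commuting $\nabla$ through the known $L^2$-observability inequality \cite[Theorem~4]{AM14} and absorbing commutator terms of lower order via a standard compactness-uniqueness or direct absorption argument. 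Combined with the $H^1$-continuity of $S$ already shown, this coercivity gives that $S$ is bounded below on $H^1$, hence has closed range; injectivity (inherited from the $L^2$ case) plus the density of the range then forces surjectivity.

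Finally, with $S$ bounded, bounded below, and surjective on $H^1(\T^d)$, the open mapping theorem yields that $S$ is an isomorphism of $H^1(\T^d)$, completing the proof. The delicate technical point throughout is the handling of the commutators $[\nabla, a^2]$ in the observability estimate: these produce terms involving $\|a\, e^{it\Delta}\phi_0\|_{L^2}$ at one order lower, which must be absorbed either by the $L^2$-observability already in hand or via a compactness argument exploiting the injectivity of $S$ to rule out a nontrivial kernel in the relevant quotient. Since this is precisely the content cited from \cite[Lemma~3.1]{Lau10b}, I would lean on that reference for the absorption step rather than reconstructing it in full.
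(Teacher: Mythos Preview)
The paper itself does not supply a proof of this proposition: it merely cites \cite[Lemma~3.1]{Lau10b} and remarks that the one-dimensional argument there adapts verbatim to $d\geq 1$. Your sketch is therefore not competing against any proof written in this paper, and the ingredients you identify (the Duhamel formula for $S$, boundedness of $S$ on $H^1$ via the commutator $[\nabla,a^2]$, and an observability estimate at a shifted regularity obtained by commuting derivatives through the $L^2$-observability of \cite{AM14}) are indeed the ones used in the cited reference.

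There is, however, a genuine gap in your surjectivity step. You write that the estimate $\|\phi_0\|_{H^1}^2 \leq C\int\varphi^2\|a e^{it\Delta}\phi_0\|_{H^1}^2\,dt$ gives ``$S$ bounded below on $H^1$, hence closed range; injectivity plus density of the range then forces surjectivity.'' The density of the range in $H^1$ is never justified, and the $H^1$-observability you write down is the wrong inequality for surjectivity: by HUM duality, surjectivity of $S:H^1\to H^1$ is equivalent to observability in the \emph{dual} space $H^{-1}$, not in $H^1$. You seem aware of this (you mention ``the adjoint observability in $H^{-1}$'') but then write the $H^1$ estimate instead. The clean way to close the argument, and the one actually used in \cite{Lau10b}, is a Fredholm/compact perturbation route: with $D=(1-\Delta)^{1/2}$ commuting with $e^{is\Delta}$, one has $DSD^{-1}=S+K$ where $K$ is compact on $L^2$ (since $[D,a^2]D^{-1}$ is order $-1$); hence $DSD^{-1}$ is Fredholm of index zero on $L^2$, and its injectivity (inherited from $S$ on $L^2$) yields its surjectivity, which is exactly $S:H^1\to H^1$ surjective. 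This replaces your unjustified density claim with a concrete compactness argument.
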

Let us mention that Proposition~\ref{prop:control_operator} is proved in \cite[Lemma~3.1]{Lau10b} in the one-dimensional setting. However the strategy adopted by the author \cite{Lau10b} can be easily adapted in any dimension $d \geq 1$.
\subsubsection{Second step: controlability near to $0$.}
As a first step, we prove the following proposition:

\begin{proposition}\label{prop:local_control}
Let $b \in (1/2, 1)$ and $T>0$. There exists $\varepsilon>0$ such that for all $u_0 \in H^1(\T^d)$ satisfying $\|u_0\|_{H^1(\T^d)} \leq \varepsilon$ there exists $g \in \mathcal C([0,T], H^1(\T^d))$ supported in $[0,T] \times \overline{\omega}$ so that the unique solution $u \in X^{1,b}_T$ of
\begin{equation}
	\label{eq:NonlinearSchrodinger_torusControllednear0}
		\left\{
			\begin{array}{ll}
				i  \partial_t u  = -\Delta u + |u|^{2} u + g 1_{\omega} & \text{ in }  (0,+\infty) \times \T^d, \\
				u(0, \cdot) = u_0 & \text{ in } \T^d,
			\end{array}
		\right.
\end{equation}
satisfies $u(T,\cdot)=0$.
\end{proposition}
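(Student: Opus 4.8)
The plan is to recast \eqref{eq:NonlinearSchrodinger_torusControllednear0} as a fixed-point problem in the Bourgain space $X_T^{1,b}$, using the linear control operator $S$ of Proposition~\ref{prop:control_operator} to absorb the cubic nonlinearity as a source term. First I would record the algebraic identity underlying the construction. For a given source $f$, Duhamel's formula applied to the backward system \eqref{eq:freelinearSchrodinger_torus} gives $\Psi(0)=S\phi_0=i\int_0^T e^{-i\tau\Delta}a^2\varphi^2(\tau)e^{i\tau\Delta}\phi_0\,d\tau$, and a direct computation shows that the solution $v$ of $i\partial_t v=-\Delta v+f+a^2\varphi^2 e^{it\Delta}\phi_0$ with $v(0)=u_0$ satisfies $v(T)=0$ if and only if $S\phi_0=u_0-i\int_0^T e^{-i\tau\Delta}f(\tau)\,d\tau$. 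Taking $f=|u|^2u$, the correct control datum is therefore
\begin{equation*}
\phi_0(u)=S^{-1}\left(u_0-i\int_0^T e^{-i\tau\Delta}(|u|^2u)(\tau)\,d\tau\right),
\end{equation*}
which is well defined in $H^1(\T^d)$ by the isomorphism property of $S$.

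Concretely, I would define the map $\mathcal F:X_T^{1,b}\to X_T^{1,b}$ sending $u$ to the unique solution $v$ (given by Proposition~\ref{prop:wellposedbourgain}, since $a^2\varphi^2 e^{it\Delta}\phi_0(u)\in L^2(0,T;H^1)$) of
\begin{equation*}
i\partial_t v=-\Delta v+|u|^2u+a^2(x)\varphi^2(t)e^{it\Delta}\phi_0(u),\qquad v(0,\cdot)=u_0.
\end{equation*}
By construction every fixed point of $\mathcal F$ solves \eqref{eq:NonlinearSchrodinger_torusControllednear0} and satisfies $u(T,\cdot)=0$; moreover the control $g=a^2\varphi^2 e^{it\Delta}\phi_0(u)$ is supported in $[0,T]\times\overline{\omega}$ (because $a\in C_c^\infty(\omega)$) and lies in $C([0,T];H^1(\T^d))$ (because $\phi_0(u)\in H^1$, the group $e^{it\Delta}$ is unitary on $H^1$, and $a^2\varphi^2$ is smooth). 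So it remains only to produce a fixed point.

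To run the Banach fixed point I would work in a ball $B_R=\{\norme{u}_{X_T^{1,b}}\le R\}$. The estimate on $\phi_0(u)$ is the crux: using that $f\mapsto\int_0^T e^{i(T-\tau)\Delta}f\,d\tau$ maps $X_T^{1,-b'}$ into $H^1(\T^d)$ (the endpoint of the Duhamel estimate \eqref{eq:estimationelementaryoneD}, after the unitary $e^{-iT\Delta}$) together with the trilinear estimate \eqref{eq:estimationtrilinear1} and the continuity of $S^{-1}$, one gets $\norme{\phi_0(u)}_{H^1(\T^d)}\le C(\norme{u_0}_{H^1(\T^d)}+R^3)$; the embedding $L^2(0,T;H^1)\hookrightarrow X_T^{1,-b'}$ then yields $\norme{g}_{X_T^{1,-b'}}\le C(\norme{u_0}_{H^1(\T^d)}+R^3)$. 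Combining this with \eqref{eq:estimationtrilinear1} and \eqref{eq:estimationelementaryoneD} applied to $v=\mathcal F(u)$ gives $\norme{\mathcal F(u)}_{X_T^{1,b}}\le C(\varepsilon+R^3)$; choosing $R=2C\varepsilon$ and then $\varepsilon$ small forces $\mathcal F(B_R)\subset B_R$. The contraction estimate is identical with \eqref{eq:estimationtrilinear1} replaced by the difference estimate \eqref{eq:estimationtrilinear2}, producing $\norme{\mathcal F(u)-\mathcal F(v)}_{X_T^{1,b}}\le CR^2\norme{u-v}_{X_T^{1,b}}$, which is contractive for $R$ (hence $\varepsilon$) small.

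The main obstacle is technical rather than conceptual: controlling the $H^1$-norm of the nonlinear integral $\int_0^T e^{-i\tau\Delta}(|u|^2u)\,d\tau$ with cubic (and, for the difference, quadratic) smallness. A naive bound in $L^1(0,T;H^1)$ fails because $H^1(\T^d)$ is not an algebra for $d\ge2$, so I must route this estimate through the Bourgain space $X_T^{1,-b'}$ via the endpoint Duhamel estimate and the trilinear estimates, and then compose with $S^{-1}$ and $e^{it\Delta}$ to estimate the control term. Once this Lipschitz dependence of $\phi_0$ on $u$ is established, all remaining bounds are standard. Note that, since $T$ is fixed, the required smallness comes entirely from $\norme{u_0}_{H^1(\T^d)}\le\varepsilon$ and the cubic nature of the nonlinearity, not from the factor $T^{1-b-b'}$.
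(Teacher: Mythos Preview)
Your proof is correct and follows the same overall strategy as the paper: perturb the linear HUM control by absorbing the cubic term via the isomorphism $S$ of Proposition~\ref{prop:control_operator}, and close by a Banach fixed point using the trilinear estimates in $X_T^{1,\pm b'}$. The implementation, however, differs. The paper runs the fixed point on the control datum $\phi_0\in H^1(\T^d)$: for each $\phi_0$ it solves the full \emph{nonlinear} backward problem \eqref{eq:NonlinearSchrodinger_toruscontrolled} with final data $0$, sets $L\phi_0=u(0)$, splits $L=K+S$, and iterates $B\phi_0=S^{-1}u_0-S^{-1}K\phi_0$, invoking the Lipschitz property of the nonlinear flow map \eqref{eq:flowmap}. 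You instead run the fixed point on the trajectory $u\in X_T^{1,b}$: for each $u$ you compute $\phi_0(u)$ explicitly and then solve only a \emph{linear} forward Schr\"odinger equation with frozen source $|u|^2u$. Your version is slightly more self-contained (no call to the nonlinear well-posedness at each iteration) and makes the cubic smallness visible directly; the paper's version keeps the fixed-point space finite-dimensional in flavor ($H^1$ rather than $X_T^{1,b}$) and packages the Bourgain estimates into the black-box Lipschitz flow. Both routes are standard and equivalent in difficulty.
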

\begin{proof}
For $\phi_0 \in H^1(\T^d)$, we consider $u \in X^{1, b}_T$ the unique solution of 
\begin{equation}
	\label{eq:NonlinearSchrodinger_toruscontrolled}
		\left\{
			\begin{array}{ll}
				i  \partial_t u  = -\Delta u + |u|^{2} u + a^2(x) \varphi^2(t) e^{it \Delta}\phi_0 & \text{ in }  (0,+\infty) \times \T^d, \\
				u(T, \cdot) = 0 & \text{ in } \T^d,
			\end{array}
		\right.
\end{equation}
$v \in X^{1,b}_T$ the unique solution of
\begin{equation}
	\label{eq:NonlinearSchrodinger_torus_nonlinearpart}
		\left\{
			\begin{array}{ll}
				i  \partial_t v  = -\Delta v + |u|^{2} u  & \text{ in }  (0,+\infty) \times \T^d, \\
				v(T, \cdot) = 0 & \text{ in } \T^d,
			\end{array}
		\right.
\end{equation}
and define $L\phi_0=u(0)$ and $K\phi_0=v(0)$. We therefore have 
$$\forall \phi_0 \in H^1(\T^d), \quad L\phi_0 = K\phi_0 +S\phi_0.$$

Our goal is to show that there exists $\eta>0$ such that $B_{H^1}(0,\eta) \subset \Ima (L)$.
Notice that the equation $u_0 =L\phi_0$ is equivalent to 
$$\phi_0= S^{-1} u_0 -S^{-1} K \phi_0$$
and this question is then equivalent to find a fixed point of $$B\phi_0:=S^{-1} u_0 -S^{-1} K \phi_0,$$ for $u_0$ sufficiently small in $H^1(\T^d)$. 

Let $0<\eta, \varepsilon \leq 1$ be two small parameters to be chosen later and $u_0 \in B_{H^1}(0, \eta)$. Without loss of generality, we can assume $T\leq 1$.
Since $S : H^1(\T^d) \longrightarrow H^1(\T^d)$ is an isomorphism, we have that for all $\phi_0 \in H^1(\T^d)$,
\begin{align*}\|B\phi_0 \|_{H^1(\T^d)} & \leq C ( \|u_0\|_{H^1(\T^d)}+\|Ku_0\|_{H^1(\T^d)})\\
& = C ( \|u_0\|_{H^1(\T^d)}+\|v(0, \cdot)\|_{H^1(\T^d)}).
\end{align*}
Moreover, we have thanks to the continuous embedding of $X^{1,b}_T$ in $\mathcal C([0,T], H^1(\T^d))$ and Lemma~\ref{lem:OneDEstimateSobolev},
\begin{align*}
    \|v(0, \cdot)\|_{H^1(\T^d)} & \leq C \|v\|_{X^{1,b}_T} \\
    & \leq C T^{1-b-b'} \||u|^2u\|_{X^{1,-b'}}\\
    & \leq C \|u\|^3_{X^{1,b'}_T} \leq \|u\|^3_{X^{1,b}_T}.
\end{align*}
Furthermore, by using the fact that the flow map, defined by \eqref{eq:flowmap}, is Lipschitz on the bounded set $B_{H^1(\T^d)}(0,1) \times B_{L^2(0,T; H^1(\T^d))}(0,1)$, we obtain for all $\phi_0 \in \overline{B_{H^1(\T^d)}}(0, \varepsilon)$, 
$$\|u\|_{X^{1,b}_T} \leq C \|\phi_0\|_{H^1(\T^d)} \leq C \varepsilon.$$
As a consequence, we deduce that for all $\phi_0 \in \overline{B_{H^1(\T^d)}}(0, \varepsilon)$,
$$\|B\phi_0\|_{H^1(\T^d)} \leq C (\eta + \varepsilon^3),$$
for some positive constant $C>0$ independent on $\varepsilon$ and $\eta$. We can therefore choose $\varepsilon_0>0$ such that 
$$\varepsilon^3_0 \leq \frac{\varepsilon_0}{2C},$$
and $\eta= \frac{\varepsilon_0}{2C}$,
and we obtain that the closed ball $\overline{B_{H^1(\T^d)}}(0, \varepsilon_0)$ is $B$ invariant. It remains to check that $B$ is a contraction mapping on this ball.
Let $\phi_0, \phi_1 \in \overline{B_{H^1(\T^d)}}(0, \varepsilon_0)$. We have
\begin{align*}
\|B\phi_0-B\phi_1\|_{H^1(\T^d)} & = \|S^{-1}(K\phi_0-K\phi_1)\|_{H^1(\T^d)}  \\
& \leq C \|v_0(0, \cdot)-v_1(0, \cdot)\|_{H^1(\T^d)}\\
& \leq C\|v_0-v_1\|_{X^{1, b}_T} \\
& \leq C T^{1-b-b'}\||u_0|^2 u_0-|u_1|^2u_1\|_{X^{1,-b'}_T},
\end{align*}
where $v_0$ (respectively $v_1$) is solution to \eqref{eq:NonlinearSchrodinger_torus_nonlinearpart} with $\phi_0$ (respectively with $\phi_1$).
It follows from the last inequality, together with the trilinear estimates \eqref{eq:estimationtrilinear1}, that
\begin{align*}\|B\phi_0 -B\phi_1\|_{H^1(\T^d)} & \leq C \left(\|u_0\|^2_{X^{1,b'}_T}+\|u_1\|^2_{X^{1,b'}_T}\right) \|u_0- u_1\|_{X^{1, b'}_T}\\
& \leq C \left(\|u_0\|^2_{X^{1,b}_T}+\|u_1\|^2_{X^{1,b}_T}\right) \|u_0- u_1\|_{X^{1, b}_T}.
\end{align*}
By using once again the fact that the flow map given by \eqref{eq:flowmap} is Lipschitz on bounded set, we obtain a new constant $C>0$ independant on $\varepsilon_0$ such that
\begin{align*}\|B\phi_0- B\phi_1\|_{H^1(\T^d)} &\leq C \left(\|\phi_0\|^2_{H^1(\T^d)}+ \|\phi_1\|^2_{H^1(\T^d)} \right) \|\phi_0-\phi_1\|_{H^1(\T^d)}\\
& \leq 2C\varepsilon^2_0 \|\phi_0-\phi_1\|_{H^1(\T^d)}.
\end{align*}
By now, we set $\varepsilon_0 \leq \frac1{2\sqrt C}$ and $B : \overline{B_{H^1(\T^d)}}(0, \varepsilon_0) \longrightarrow \overline{B_{H^1(\T^d)}}(0, \varepsilon_0)$ is a contraction mapping and admits an unique fixed point, according to the Banach fixed point Theorem.
\end{proof}

\subsubsection{Third step: application of the stabilization result.}

According to Proposition~\ref{prop:local_control}, there exists $\varepsilon>0$ such that for all $w_0 \in H^1(\T^d)$ satisfying $\|w_0\|_{H^1(\T^d)} \leq \varepsilon$ there exists $g \in \mathcal C([0,1], H^1(\T^d))$ supported in $[0,1] \times \overline{\omega}$ so that the unique solution $u \in X^{1,b}_T$ of \eqref{eq:NonlinearSchrodinger_torusControllednear0} satisfies $w(1,\cdot)=0$.

Let $R\geq 1$ and $u_0 \in H^1(\T^d)$ such that $E(u_0) \leq R$. By the stabilization result of Theorem~\ref{thm:stabilisation_main_result}, there exists a control $h_1 \in \mathcal C([0,+\infty), H^1(\T^d))$ such that the solution of \eqref{eq:NonlinearSchrodinger_torusControlled} satisfies 
$$\forall t\geq 0, \quad E(u(t)) \leq Ce^{-\gamma t} E(u_0),$$
where $C, \gamma$ are positive constants only depending on $\omega$.
In particular, for $T= \gamma^{-1} \ln R + \gamma^{-1} \ln \left(\frac C{\varepsilon^2}\right)$, we have 
$$\|u(T)\|^2_{H^1(\T^d)} \leq \sqrt{E(u(T))} \leq  C e^{-\gamma T} R \leq \varepsilon^2.$$
On the other hand, thanks to Proposition~\ref{prop:local_control}, there exists a control $h_2 \in \mathcal C([0,1], H^1(\T^d))$ supported in $[0,1]\times \overline \omega$ such that the solution $\tilde u$ of \eqref{eq:NonlinearSchrodinger_torusControlled} started from $u(T)$ satisfies $\tilde u(1, \cdot)=0$. 

To conclude, it suffices to define the control by $h(t, \cdot)=h_1(t,\cdot)$ on $[0,T]$ and $h(t, \cdot) = h_2(t-T, \cdot)$ on $[T, T+1]$. With this choice of control, the solution $u$ of \eqref{eq:NonlinearSchrodinger_torusControlled} satisfies $u(T+1,\cdot)=0$ and $T+1$ is a controllability time independent on $u_0$. 
In particular, $\tau(R) \leq T+1 \leq \tilde C \ln (R+1)$
where $\tilde C$ is a positive constant depending on $\gamma$ and $C$.

\bibliographystyle{alpha}
\small{\bibliography{NLSchrodinger}}

\end{document}